\documentclass{article}

\usepackage{amsmath,amssymb,amsthm,enumerate,mathrsfs}
\usepackage[numbers]{natbib}
\usepackage{booktabs}
\usepackage{adjustbox}
\usepackage{subfigure,multirow}
\usepackage{appendix}
\usepackage{authblk}
\usepackage{geometry,xcolor}
\usepackage[ruled,linesnumbered]{algorithm2e}
\usepackage{algpseudocode}%
\usepackage{longtable}
\usepackage{caption}
\usepackage{lipsum}
\usepackage{footnote}
\usepackage[misc]{ifsym}

\geometry{a4paper,left=1cm,right=1cm,top=2cm,bottom=2cm}

\newcommand{\T}{\text{T}}
\newcommand{\x}{\mathbf{x}}
\newcommand{\g}{\mathbf{g}}
\newcommand{\bb}{\mathbf{b}}

\newcommand{\s}{\mathbf{s}}
\newcommand{\y}{\mathbf{y}}
\newcommand{\xo}{\mathbf{0}}
\newcommand{\BB}{Barzilai-Borwein }
\newcommand{\BBB}{Barzilai and Borwein }

\renewcommand{\thefootnote}{\fnsymbol{footnote}}

\theoremstyle{plain}
\newtheorem{theorem}{Theorem}
\newtheorem{lemma}[theorem]{Lemma}
\newtheorem{proposition}[theorem]{Proposition}

\theoremstyle{remark}
\newtheorem{definition}{Definition}
\newtheorem{remark}{Remark}

\newcommand\blfootnote[1]{%
	\begingroup 
	\renewcommand\thefootnote{}\footnote{#1} 
	\addtocounter{footnote}{-1}%
	\endgroup 
}

\title{\bf On Convergence of Regularized Barzilai-Borwein Method}

\author{Xin Xu}
\date{}

\begin{document}
\maketitle

\blfootnote{\Letter Xin Xu \par \; xustonexin@163.com} 
\footnotetext{School of Mathematics, Southwestern University of Finance and Economics, Chengdu 611130, Sichuan, China}

\begin{abstract}
	The regularized Barzilai-Borwein (RBB) method represents a promising gradient-based optimization algorithm. In this paper, by splitting the gradient into two parts and analyzing the dynamical system of difference equations governing the
	ratio of their magnitudes, we establish that the RBB method achieves R-linear convergence for strongly convex quadratic
	functions of arbitrary dimensions. Specifically, for the two-dimensional case, we provide a concise proof demonstrating that the method exhibits at least R-linear convergence. We propose a simple yet effective adaptive regularization parameter
	scheme to further improve its performance. A typical numerical example verifies the effectiveness of this scheme.
	
	\medskip
	\noindent{\bf Keywords:} Barzilai-Borwein method, regularization, convergence.
	
	\medskip
	\noindent{\bf Mathematics Subject Classification:} 90C20, 90C25, 90C30.
\end{abstract}

\section{Introduction}\label{sec1}

Consider the problem of minimizing a strictly convex quadratic as follows 
\begin{equation}\label{equ:objective}
\min_{\mathbf{x}\in\mathbb{R}^{n}} f(\x)=\frac{1}{2}\x^{\T}A\x-\mathbf{b}^{\T}\x,
\end{equation}
where $A\in\mathbb{R}^{n\times n}$ is a real symmetric positive definite matrix and $\bb\in\mathbb{R}^{n}$. For solving \eqref{equ:objective}, gradient-like descent methods have the following iterative form 
\begin{equation}
\x_{k+1}=\x_{k}+\alpha_{k}^{-1}(-\g_{k}),
\end{equation}  
where $\x_{k}$ is the $k$th iterate, $\g_{k}=\nabla f(\x_{k})$ is the gradient of $f$ at $\x_{k}$, $\alpha_{k}^{-1}$ is the step size. Different gradient descent methods would have different rules for determining the $\alpha_{k}$. The classical steepest descent (SD) method dates back to Cauchy \cite{Cauchy2009Methodegeneralepour},  which determines its step size by the so-called exact line search
\begin{equation*}\label{SDG}
\alpha_{k}^{SD}=\mathop{\text{argmin}}_{\alpha_{k}>0} f\big(\mathbf{x}_{k}+\alpha_{k}^{-1}(-\mathbf{g}_{k})\big)=\frac{\g_{k}^{\T}A\g_{k}}{\g_{k}^{\T}\g_{k}}.
\end{equation*}
Nevertheless, when the SD method is applied to the any-dimensional quadratics, the search directions it generates usually tend to be two orthogonal directions, i.e., the ``zigzagging" phenomenon occurs (see Akaike \cite{Akaike1959successivetransformationprobability}). In particular, when $A$ is badly conditioned, this phenomenon will seriously reduce the convergence rate of the SD method  \cite{Sun2006OptimizationTheoryMethods,JorgeNocedal2006NumericalOptimization}.

In 1988, \BBB (BB) \cite{Barzilai1988TwoPointStep} proposed two novel $\alpha_{k}$ as follows
\begin{equation}\label{BB steps}
\alpha_{k}^{BB1}=\frac{\mathbf{s}_{k-1}^{\T}\mathbf{y}_{k-1}}{\mathbf{s}_{k-1}^{\T}\mathbf{s}_{k-1}}=\frac{\g_{k-1}^{\T}A\g_{k-1}}{\g_{k-1}^{\T}\g_{k-1}}\quad\text{and}\quad \alpha_{k}^{BB2}=\frac{\mathbf{y}_{k-1}^{\T}\mathbf{y}_{k-1}}{\mathbf{s}_{k-1}^{\T}\mathbf{y}_{k-1}}=\frac{\g_{k-1}^{\T}A^2\g_{k-1}}{\g_{k-1}^{\T}A\g_{k-1}},
\end{equation} 
which are the solutions to the following least squares problems
\begin{equation*}\label{LS}
\min_{\alpha\in\mathbb{R}} \|\alpha \mathbf{s}_{k-1} - \mathbf{y}_{k-1}\|_{2}^{2}\quad\text{and}\quad \min_{\alpha\in \mathbb{R}} \|\mathbf{s}_{k-1} -  \alpha^{-1}\mathbf{y}_{k-1}\|_{2}^{2},
\end{equation*} 
respectively, where $\mathbf{s}_{k-1}=\mathbf{x}_{k}-\mathbf{x}_{k-1}$ and $\mathbf{y}_{k-1}=\mathbf{g}_{k}-\mathbf{g}_{k-1}$. By the Cauchy-Schwarz inequality, one has $\alpha_{k}^{BB1}\le\alpha_{k}^{BB2}$. In the least squares sense, the BB method approximates the Hessian of $f(\mathbf{x}_{k})$ using $\alpha_{k}^{BB1}I$ or $\alpha_{k}^{BB2}I$, where $I$ is the identity matrix. Therefore, the BB method incorporates quasi-Newton property to gradient descent method by the choice of step size. In the seminal paper \cite{Barzilai1988TwoPointStep}, the authors proved that the BB method converges \text{R-superlinearly} to the global minimizer when $n=2$. In any-dimensional problem, it is still globally convergent \cite{Raydan1993BarzilaiBorweinchoice} and the convergence is R-linear  \cite{Dai2002Rlinearconvergence}. A further analysis on the asymptotic behaviors of BB-like methods can be found in \cite{Dai2005AsymptoticBehaviourSome}.

The BB method has attracted considerable attention in computational mathematics and optimization communities owing to its low computational complexity and remarkable empirical performance. To mention just a few representative works, Raydan \cite{Raydan1997BarzilaiBorweinGradient} developed an efficient global BB algorithm for unconstrained optimization by incorporating the conventional non-monotonic line search proposed by Grippo et al. \cite{Grippo1986NonmonotoneLineSearch}, thereby facilitating the practical application of the BB methodology. The adaptive alternating BB step-sizes proposed by Zhou et al. \cite{Zhou2006GradientMethodsAdaptive} effectively revealed the spectral scanning property inherent in BB step-sizes, thereby providing valuable insights for numerous subsequent improvements to BB-type methods. Furthermore, it is particularly noteworthy that Xu \cite{Xu2025IBB} introduced an interpolated least-squares model that elegantly unifies the two original BB step sizes. Additional refinements and applications of the BB method can be found in \cite{Molina1996PreconditionedBarzilaiBorwein, Friedlander1998GradientMethodRetards, Dai2005ProjectedBarzilaiBorweina, Dai2005Newalgorithmssingly, Huang2021EquippingBarzilaiBorwein, Crisci2020SpectralPropertiesBarzilai, An2020Numericalconstructionspherical, Jalilian2023}.

The superior performance of the BB method demonstrates that the behavior of the SD algorithm cannot be solely attributed to its search direction, and that designing an efficient step size is a critical factor. Recently, An and Xu \cite{An2024RegularizedBarzilaiBorwein} proposed a regularized \BB (RBB) method, which yields a scalar $\alpha_{k}^{RBB}$ defined as follows
\begin{equation}\label{equ:RBB}
\alpha_{k}^{RBB}=\frac{\mathbf{s}_{k-1}^{\T}\mathbf{y}_{k-1}+\tau_{k} \mathbf{y}_{k-1}^{\T} \mathbf{y}_{k-1}}{\mathbf{s}_{k-1}^{\T}\mathbf{s}_{k-1}+\tau_{k} \mathbf{s}_{k-1}^{\T} \mathbf{y}_{k-1}},
\end{equation}
which is the solution to the following regularized least-squares problem 
\begin{equation*}\label{equ:LSR}
\min_{\alpha\in\mathbb{R}} \Big\{\Vert \alpha \s_{k-1}-\y_{k-1}\Vert_{2}^{2} + \tau_{k}\Vert \alpha\sqrt{A}\s_{k-1} - \sqrt{A}\y_{k-1}\Vert_{2}^{2} \Big\},
\end{equation*}
where $\tau_{k}\ge0$ is the regularization parameter. Note that when $\tau_{k}=0$, $\alpha_{k}^{RBB}$ reduces to the  $\alpha_{k}^{BB1}$. Evidently, $\alpha_{k}^{RBB}$ can be viewed as a modification of $\alpha_{k}^{BB1}$ through the incorporation of a regularization term, while still preserving the quasi-Newton property. Although the BB method is a non-monotonic gradient descent algorithm, such non-monotonicity does not compromise its overall convergence. Nevertheless, the oscillation in the gradient norm induced by this behavior may adversely affect the convergence speed, particularly for the BB1 method which tends to produce longer step sizes. The design of the RBB step size aims to enhance the stability of the BB1 method by introducing a regularization term, thereby improving its overall performance.

Through a suitable regularization parameter scheme, the numerical results of \cite{An2024RegularizedBarzilaiBorwein,Zhao2024conicregularizedBarzilaiBorwein} show that the RBB method is a very promising gradient descent method. Nevertheless, \cite{An2024RegularizedBarzilaiBorwein} does not provide a rigorous convergence analysis of the RBB method. To address this gap, this paper quadratic problem \eqref{equ:objective}, we formally establishes the convergence properties of the RBB method for the quadratic optimization problem \eqref{equ:objective}.

This work makes the following three contributions:
\begin{itemize}
	\item By splitting the gradient into two components and analyzing the dynamical system of difference equations governing the ratio of their magnitudes, we establish the R-linear convergence of the RBB method for strongly convex quadratic functions of arbitrary dimensions. Furthermore, we demonstrate that the convergence rate of the RBB method is determined by the convergence rate of the gradient component corresponding to the smallest eigenvalue of the Hessian matrix, while its stability depends on the convergence behavior of the other gradient component.
	\item We analyze several special cases of the RBB method in the two-dimensional case and provide a more succinct and illustrative proof demonstrating that the method achieves at least R-linear convergence.
	\item Based on the analysis framework established in this paper, we uniformly adopt the perspective of difference equation dynamics to analyze the convergence of the gradient descent methods, encompassing both the gradient descent method with a delayed multi-step strategy and the classical steepest descent method.
\end{itemize}

This paper is organized as follows. In Section \ref{sec:n-dimensional}, for quadratic minimization problems of arbitrary dimensions, we first present some fundamental settings, then conduct a detailed analysis of the dynamical properties of the second-order difference equation generated by the RBB method for the sequence  $\{r_k\}$, and thereby establish the convergence of the RBB method. Finally, we examine the behavioral characteristics of the RBB method and the BB1 method in the two-dimensional case. In Section \ref{sec:discussions}, we extend the analytical framework developed in this paper to more general gradient descent schemes, including those with a delayed multi-step strategy and the steepest descent method. In Section \ref{sec:parameter}, we establish an effective adaptive regularization parameter scheme. In Section \ref{sec:NumerEx}, we preliminarily conduct numerical experiments.  

\section{Convergence analysis}\label{sec:n-dimensional}
In this section, we first establish the $R$-linear convergence of the RBB method for  any-dimensional quadratic function \eqref{equ:objective}. Then, we focus on several special cases of $\alpha_{k}^{RBB}$ in two-dimensional problems.
\subsection{Preliminary settings}\label{sec:2.0}
Without loss of generality, we consider a matrix $A$ of the following form 
\begin{equation*}
A=\begin{bmatrix}
A_{1} & \xo\\
\xo & 1
\end{bmatrix},
\end{equation*}
where the sub-matrix 
$$A_{1}=\text{diag}(a_{1},a_{2},\ldots,a_{n-1}),$$
where $\lambda:=a_{1}\ge a_{2}\ge\ldots\ge a_{n-1}:=\gamma>1$, $\lambda>2$, and $n\ge2$. 

We denote $\g_{k-1}=\big((\g_{k-1}^{1})^{\T}, \ \g_{k-1}^{(2)}\big)^{\T}$ with $\g_{k-1}^{1}=\big(\g_{k-1}^{(1)}, \g_{k-1}^{(2)},\ldots,\g_{k-1}^{(n-1)}\big)^{\T}\in\mathbb{R}^{n-1}(\neq\xo)$,  $\g_{k-1}^{(n)}\neq 0$, and let
\begin{equation}\label{equ:nepsilon}
r_{k-1}=\frac{\|\g_{k-1}^{1}\|_{2}^{2}}{(\g_{k-1}^{(n)})^{2}}>0.
\end{equation}
Then we have 
\begin{align}\label{equ:normgk}
\begin{split}
\|\g_{k-1}\|_{2}^{2}&=\|\g_{k-1}^{1}\|_{2}^{2}+(\g_{k-1}^{(n)})^2\\
&=(\g_{k-1}^{(n)})^2(r_{k-1}+1).
\end{split}	
\end{align}

According to the setting of $A_{1}$ and the properties of the Rayleigh quotient, it follows that $\gamma\le\frac{(\g_{k-1}^{1})^{\T}A_{1}\g_{k-1}^{1}}{\|\g_{k-1}^{1}\|_{2}^{2}}\le\lambda$ and  $\gamma\le\frac{(\g_{k}^{1})^{\T}A_{1}\g_{k}^{1}}{\|\g_{k}^{1}\|_{2}^{2}}\le\lambda$. Consequently, there exist $\eta_{k-1}\in[\frac{\gamma}{\lambda},1]$ and $\eta_{k}\in[\frac{\gamma}{\lambda},1]$ such that
\begin{equation}\label{R1}
(\g_{k-1}^{1})^{\T}A_{1}\g_{k-1}^{1}=\eta_{k-1}\lambda\|\g_{k-1}^{1}\|_{2}^{2},\quad\text{and}\quad(\g_{k}^{1})^{\T}A_{1}\g_{k}^{1}=\eta_{k}\lambda\|\g_{k}^{1}\|_{2}^{2}.
\end{equation} 
Similarly, we know that there exist $\bar{\eta}_{k-1}\in[\frac{\gamma}{\lambda},1]$ and $\bar{\eta}_{k}\in[\frac{\gamma}{\lambda},1]$ such that
\begin{equation}\label{R2}
(\g_{k-1}^{1})^{\T}A_{1}^2\g_{k-1}^{1}=\bar{\eta}_{k-1}\eta_{k-1}\lambda^2\|\g_{k-1}^{1}\|_{2}^{2},\quad\text{and}\quad(\g_{k}^{1})^{\T}A_{1}^2\g_{k}^{1}=\bar{\eta}_{k}\eta_{k}\lambda^2\|\g_{k}^{1}\|_{2}^{2}.	
\end{equation}
Let $w_{i}=\frac{(\g_{k-1}^{(i)})^{2}}{\sum_{j=1}^{n-1}(\g_{k-1}^{(j)})^{2}}$, for $i=1,2,\ldots,n-1$, so that $\sum_{i=1}^{n-1}w_{i}=1$ and $w_{i}\ge0$. Let $a=\sum_{i=1}^{n-1}w_{i}a_{i}$ and $b=\sum_{i=1}^{n-1}w_{i}a_{i}^{2}$. By Jensen's inequality and since the function $f(x)=x^2$ is convex, we have
\begin{equation}\label{Jensen}
b=\sum_{i=1}^{n-1}w_{i}a_{i}^{2}\ge(\sum_{i=1}^{n-1}w_{i}a_{i})^2=a^2.
\end{equation} 
Equality holds if and only if all $a_{i}$ are equal, i.e., $a_{1}=a_{2}=\ldots=a_{n-1}$. It follows from \eqref{R1}, \eqref{R2} and \eqref{Jensen} that
\begin{equation}\label{bareta}
\bar{\eta}_{k-1}\ge\eta_{k-1},\quad\text{and}\quad\bar{\eta}_{k}\ge\eta_{k}
\end{equation} 
holds. 


Since $\s_{k-1}=-(\alpha_{k-1}^{RBB})^{-1}\g_{k-1}$ and $\y_{k-1}=-(\alpha_{k-1}^{RBB})^{-1}A\g_{k-1}$, according the definition of $\alpha_{k}^{RBB}$ \eqref{equ:RBB} and the results \eqref{R1}, \eqref{R2}, \eqref{bareta}, we have 
\begin{align}\label{equ:nnRBB}
\begin{aligned}
\alpha_{k}^{RBB}&=\frac{\g_{k-1}^{\T}A\g_{k-1}+\tau_{k}\g_{k-1}^{\T}A^2\g_{k-1}}{\g_{k-1}^{\T}\g_{k-1}+\tau_{k}\g_{k-1}^{\T}A\g_{k-1}}\\
&=\frac{(\g_{k-1}^{1})^{\T}A_{1}\g_{k-1}^{1}+(\g_{k-1}^{(n)})^2+\tau_{k}\Big[(\g_{k-1}^{1})^{\T}A_{1}^2\g_{k-1}^{1}+(\g_{k-1}^{(n)})^2\Big]}{\|\g_{k-1}^{1}\|_{2}^{2}+(\g_{k-1}^{(n)})^2+\tau_{k}\Big[(\g_{k-1}^{1})^{\T}A_{1}\g_{k-1}^{1}+(\g_{k-1}^{(n)})^2\Big]}\\
&=\frac{\eta_{k-1}\lambda\|\g_{k-1}^{1}\|_{2}^{2}+(\g_{k-1}^{(n)})^2+\tau_{k}\Big[\bar{\eta}_{k-1}\eta_{k-1}\lambda^2\|\g_{k-1}^{1}\|_{2}^{2}+(\g_{k-1}^{(n)})^2\Big]}{\|\g_{k-1}^{1}\|_{2}^{2}+(\g_{k-1}^{(n)})^2+\tau_{k}\Big[\eta_{k-1}\lambda\|\g_{k-1}^{1}\|_{2}^{2}+(\g_{k-1}^{(n)})^2\Big]}\\
&=\frac{1+r_{k-1}\eta_{k-1}\lambda+\tau_{k}[r_{k-1}\bar{\eta}_{k-1}\eta_{k-1}\lambda^2+1]}{1+r_{k-1}+\tau_{k}[r_{k-1}\eta_{k-1}\lambda+1]},
\end{aligned}
\end{align}
with $\tau_{k}\ge0$. 

Because 
\begin{align}\label{hhk}
\g_{k+1}=\big(I-(\alpha_{k}^{RBB})^{-1}A\big)\g_{k},
\end{align}
from \eqref{R1}, \eqref{R2}, and  \eqref{equ:nnRBB}, 
we have  
\begin{align}\label{equ:gkk11}
\begin{split}
\|\g_{k+1}^{1}\|_{2}^{2}&=\|\big(I-(\alpha_{k}^{RBB})^{-1}A_{1}\big)\g_{k}^{1}\|_{2}^{2}\\
&=(\g_{k}^{1})^{\T}\big(I-(\alpha_{k}^{RBB})^{-1}A_{1}\big)^{\T}\big(I-(\alpha_{k}^{RBB})^{-1}A_{1}\big)\g_{k}^{1}\\
&=(\g_{k}^{1})^{\T}\big(I-\frac{2}{\alpha_{k}^{RBB}}A_{1}+\frac{1}{(\alpha_{k}^{RBB})^2}A_{1}^{2}\big)\g_{k}^{1}\\
&=\frac{(\alpha_{k}^{RBB})^{2}-2\alpha_{k}^{RBB}\eta_{k}\lambda+\bar{\eta}_{k}\eta_{k}\lambda^{2}}{(\alpha_{k}^{RBB})^{2}}\|\g_{k}^{1}\|_{2}^{2}\\
&=\beta_{k}\|\g_{k}^{1}\|_{2}^{2},
\end{split}	
\end{align}
\begin{align}\label{equ:gkk22}
\begin{split}
(\g_{k+1}^{(n)})^2&=\big(1-(\alpha_{k}^{RBB})^{-1}\big)^2(\g_{k}^{(n)})^{2}\\
&=\xi_{k}^2(\g_{k}^{(n)})^2,
\end{split}
\end{align}
where
	\begin{align}\label{equ:betak}
	\begin{split}
	\beta_{k}=\frac{\big(P+\tau_{k}C\big)^2+\big(Q+\tau_{k}E\big)^2\eta_{k}(\bar{\eta}_{k}-\eta_{k})\lambda^2}{\big[1+r_{k-1}\eta_{k-1}\lambda+\tau_{k}(1+r_{k-1}\bar{\eta}_{k-1}\eta_{k-1}\lambda^2)\big]^2}
	\end{split}
	\end{align}
	with $P=1-\eta_{k}\lambda+r_{k-1}\lambda(\eta_{k-1}-\eta_{k})$, $C=1-\eta_{k}\lambda+r_{k-1}\eta_{k-1}\lambda^2(\bar{\eta}_{k-1}-\eta_{k})$, $Q=1+r_{k-1}>1$,
	$E=r_{k-1}\eta_{k-1}\lambda+1>1$,
and
\begin{equation}\label{equ:xik}
\xi_{k}=\frac{r_{k-1}(\eta_{k-1}\lambda-1)+\tau_{k}r_{k-1}\eta_{k-1}\lambda(\bar{\eta}_{k-1}\lambda-1)}{1+r_{k-1}\eta_{k-1}\lambda+\tau_{k}(1+r_{k-1}\bar{\eta}_{k-1}\eta_{k-1}\lambda^2)}\in(0,1).	
\end{equation} 
From \eqref{equ:nepsilon}, \eqref{equ:gkk11}, and \eqref{equ:gkk22}, we have 
\begin{equation}\label{equ:nrecurion}
	\frac{r_{k-1}^{2}r_{k+1}}{r_{k}}=h_{k-1},
\end{equation}
where
\begin{small}
	\begin{align}\label{equ:hkkk}
	\begin{aligned}
	h_{k-1}=\frac{\big(P+\tau_{k}C\big)^2+\big(Q+\tau_{k}E\big)^2\eta_{k}(\bar{\eta}_{k}-\eta_{k})\lambda^2}{\big[(\eta_{k-1}\lambda-1)+\tau_{k}\eta_{k-1}\lambda(\bar{\eta}_{k-1}\lambda-1)\big]^2}.
	\end{aligned}
	\end{align}
\end{small}
Essentially, equation \eqref{equ:nrecurion} defines a second-order non-homogeneous constant coefficient difference equation with respect to $r_{k}$  \cite{Elaydi2005IntroductionDifferenceEquations,Kelley1991DifferenceEquationsIntroduction}. Further, we can rewritten the value of $h_{k-1}$ as follows
\begin{equation*}
h_{k-1}=\delta_{k}^2\big[(\alpha_{k}^{RBB}-\eta_{k}\lambda)^2+\eta_{k}(\bar{\eta}_{k}-\eta_{k})\lambda^2\big],
\end{equation*}
where 
$$\delta_{k}=\frac{1+r_{k-1}+\tau_{k}(r_{k-1}\eta_{k-1}\lambda+1)}{(\eta_{k-1}\lambda-1)+\tau_{k}\eta_{k-1}\lambda(\bar{\eta}_{k-1}\lambda-1)}.$$
\begin{proposition}\label{prop:deltak}
	$\delta_{k}$ is monotonically decreasing with respect to $\tau_{k}$ and $\delta_{k}\in\Big[\frac{r_{k-1}\eta_{k-1}\lambda+1}{\eta_{k-1}\lambda(\bar{\eta}_{k-1}\lambda-1)},\frac{1+r_{k-1}}{\eta_{k-1}\lambda-1}\Big]$.
\end{proposition}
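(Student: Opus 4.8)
The plan is to regard $\delta_{k}$ as a linear-fractional (Möbius) function of the single nonnegative variable $\tau_{k}$ and to read off both the monotonicity and the range directly from its coefficients. Writing
\[
\delta_{k}(\tau_{k})=\frac{A+B\tau_{k}}{C+D\tau_{k}},\qquad A=1+r_{k-1},\quad B=r_{k-1}\eta_{k-1}\lambda+1,\quad C=\eta_{k-1}\lambda-1,\quad D=\eta_{k-1}\lambda(\bar{\eta}_{k-1}\lambda-1),
\]
I would first record that the denominator stays strictly positive on $[0,\infty)$: since $\eta_{k-1}\in[\gamma/\lambda,1]$ we get $\eta_{k-1}\lambda\ge\gamma>1$, so $C>0$; and $\bar{\eta}_{k-1}\ge\eta_{k-1}$ by \eqref{bareta} gives $\bar{\eta}_{k-1}\lambda\ge\eta_{k-1}\lambda>1$, hence $D>0$ and $C+D\tau_{k}>0$ throughout.

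For monotonicity I would differentiate, obtaining $\delta_{k}'(\tau_{k})=(BC-AD)/(C+D\tau_{k})^{2}$, so the sign of $\delta_{k}'$ equals the sign of $BC-AD$ and is independent of $\tau_{k}$. Expanding and collecting terms (with the shorthand $u:=\eta_{k-1}\lambda$, $v:=\bar{\eta}_{k-1}\lambda$) yields the identity
\[
AD-BC=(u-1)^{2}+(1+r_{k-1})\,u\,(v-u)=(\eta_{k-1}\lambda-1)^{2}+(1+r_{k-1})\,\eta_{k-1}\lambda\,(\bar{\eta}_{k-1}\lambda-\eta_{k-1}\lambda).
\]
Both terms on the right are nonnegative: the first because $u>1$, the second because $1+r_{k-1}>0$, $\eta_{k-1}\lambda>0$ and $\bar{\eta}_{k-1}\lambda\ge\eta_{k-1}\lambda$, again by \eqref{bareta}. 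Hence $BC-AD\le0$, so $\delta_{k}'(\tau_{k})\le0$ and $\delta_{k}$ is monotonically decreasing in $\tau_{k}$.

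The bounds then follow from monotonicity together with the two endpoint values. At $\tau_{k}=0$ the function attains its maximum $\delta_{k}(0)=A/C=(1+r_{k-1})/(\eta_{k-1}\lambda-1)$, and as $\tau_{k}\to\infty$ it decreases to the infimum $B/D=(r_{k-1}\eta_{k-1}\lambda+1)/(\eta_{k-1}\lambda(\bar{\eta}_{k-1}\lambda-1))$; since $\delta_{k}$ is continuous and decreasing on $[0,\infty)$, all of its values lie in the closed interval $\big[B/D,\,A/C\big]$, which is precisely the claimed bracket.

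The whole argument is elementary; the only place that needs care is the algebraic simplification of $AD-BC$ into the manifestly nonnegative form above, and once that identity is in hand everything else is immediate. Note that both the positivity of the denominator and the nonnegativity of $v-u$ rest on the single fact $\bar{\eta}_{k-1}\ge\eta_{k-1}$ from \eqref{bareta}, so no further structural property of the RBB iteration is required, and I do not expect a genuine obstacle here.
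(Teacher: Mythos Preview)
Your proof is correct and follows essentially the same route as the paper: both compute the $\tau_{k}$-derivative of the linear-fractional map, observe that the denominator is positive and the numerator $BC-AD$ is a constant in $\tau_{k}$, show that constant is negative, and then read off the range from the endpoint values $\tau_{k}=0$ and $\tau_{k}\to\infty$. Your explicit identity $AD-BC=(u-1)^{2}+(1+r_{k-1})u(v-u)$ is in fact precisely what underlies the paper's displayed inequality $d\delta_{k}/d\tau_{k}\le -(\eta_{k-1}\lambda-1)^{2}/(\cdots)^{2}$, so the two arguments coincide up to notation; note that since $(u-1)^{2}>0$ you actually get the strict inequality $\delta_{k}'<0$, matching the paper.
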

\begin{proof}
Directly computing the derivative of $\delta_{k}$ with respect to $\tau_{k}$ yields that 
\begin{align*}
	\begin{aligned}
	\frac{d(\delta_{k})}{d(\tau_{k})}&=\frac{r_{k-1}\eta_{k-1}^2\lambda^2-(r_{k-1}+1)\eta_{k-1}\bar{\eta}_{k-1}\lambda^2+2\eta_{k-1}\lambda-1}{\big[(\eta_{k-1}\lambda-1)+\tau_{k}\eta_{k-1}\lambda(\bar{\eta}_{k-1}\lambda-1)\big]^2}\\
	&\le\frac{-(\eta_{k-1}\lambda-1)^2}{\big[(\eta_{k-1}\lambda-1)+\tau_{k}\eta_{k-1}\lambda(\bar{\eta}_{k-1}\lambda-1)\big]^2}\\
	&<0
	\end{aligned}
\end{align*}
holds. Therefore, the maximum and minimum values of $\delta_{k}$ are attained at $\tau_{k}=0$ and $\infty$, respectively. This completes the proof.
\end{proof}
Combine Proposition \ref{prop:deltak} and the fact of  $\big[(\alpha_{k+1}^{RBB}-\eta_{k}\lambda)^2+\eta_{k}(\bar{\eta}_{k}-\eta_{k})\lambda^2\big]<2\lambda^2.$
Then, we have
\begin{equation}\label{equ:hk}
h_{k-1}< 2\frac{(1+r_{k-1})^2}{(\eta_{k-1}\lambda-1)^2}\lambda^2,
\end{equation} 
which implies that $h_{k-1}$ is bounded. This conclusion is crucial, as it demonstrates that the non-homogeneous term in Equation \eqref{equ:nrecurion} does not exert an overwhelming influence on the general solution corresponding to the homogeneous equation. The results of Proposition \ref{prop:deltak} indicates that increasing the regularization parameter can reduce the value of the non-homogeneous term, thereby effectively diminishing its influence on the homogeneous general solution.

\begin{theorem}\label{prop:increasing}
	In \eqref{equ:xik}, the $\xi_{k}$ is monotonically increasing with respect to the regularization parameter $\tau_{k}$, and we have 
	\begin{equation}\label{equ:fanwei}
	\xi_{k}\in(0,1-\frac{1}{\lambda}).
	\end{equation}	
\end{theorem}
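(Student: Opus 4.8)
The plan is to regard $\xi_k$ as a linear-fractional function of $\tau_k$, settle its monotonicity first, and then obtain the range from the two extreme values at $\tau_k=0$ and $\tau_k\to\infty$. Introduce
\[
A=r_{k-1}(\eta_{k-1}\lambda-1),\quad B=r_{k-1}\eta_{k-1}\lambda(\bar{\eta}_{k-1}\lambda-1),\quad C=1+r_{k-1}\eta_{k-1}\lambda,\quad D=1+r_{k-1}\bar{\eta}_{k-1}\eta_{k-1}\lambda^{2},
\]
so that $\xi_k=(A+\tau_k B)/(C+\tau_k D)$, and abbreviate $u=\eta_{k-1}\lambda$, $v=\bar{\eta}_{k-1}\lambda$. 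Since $\eta_{k-1},\bar{\eta}_{k-1}\in[\gamma/\lambda,1]$, since $\bar{\eta}_{k-1}\ge\eta_{k-1}$ by \eqref{bareta}, and since $\gamma>1$, one has $1<\gamma\le u\le v\le\lambda$; in particular $A,B,C,D>0$.

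For the monotonicity statement I would differentiate, getting $\frac{d\xi_k}{d\tau_k}=\frac{BC-AD}{(C+\tau_k D)^{2}}$, so that only the sign of $BC-AD$ matters. Expanding both products in $u$, $v$, $r_{k-1}$ and cancelling the common $r_{k-1}^{2}u^{2}v$ term, I expect the identity
\[
BC-AD=r_{k-1}\Big[\,(uv-2u+1)+r_{k-1}\,u\,(v-u)\,\Big].
\]
The bracket is manifestly positive: $r_{k-1}u(v-u)\ge 0$ because $v\ge u$, and $uv-2u+1\ge u^{2}-2u+1=(u-1)^{2}>0$ because $v\ge u>1$. Hence $BC-AD>0$, and $\xi_k$ is strictly increasing in $\tau_k$.

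For \eqref{equ:fanwei}, positivity is immediate from the signs: $A+\tau_k B\ge A>0$ (using $u>1$) and $C+\tau_k D>0$, so $\xi_k>0$. For the upper bound I would use the monotonicity just proved: since $\tau_k$ is finite, $\xi_k<\lim_{\tau_k\to\infty}\xi_k=B/D$, so it suffices to show $B/D\le 1-\frac{1}{\lambda}$. Clearing the positive denominators $\lambda$ and $1+r_{k-1}uv$ and simplifying, this inequality becomes $r_{k-1}u(v-\lambda)\le\lambda-1$, which holds because $v\le\lambda$ makes the left-hand side nonpositive while $\lambda-1>1>0$. Combining, $\xi_k<B/D\le 1-\frac{1}{\lambda}$, which is \eqref{equ:fanwei}.

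The only genuinely delicate point is the algebraic collapse of $BC-AD$ into the displayed nonnegative form; organizing the expansion around the substitutions $u=\eta_{k-1}\lambda$, $v=\bar{\eta}_{k-1}\lambda$ and invoking $v\ge u$ is what keeps the cancellation transparent. The remaining estimates — $(u-1)^{2}\ge0$, $v\le\lambda$, and the reduction of the range to the $\tau_k\to\infty$ limit — are routine.
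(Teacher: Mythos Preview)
Your proposal is correct and follows essentially the same route as the paper: both compute the derivative of the linear-fractional expression, reduce the sign question to showing that the numerator is at least $(\eta_{k-1}\lambda-1)^{2}$ via $\bar{\eta}_{k-1}\ge\eta_{k-1}$, and then bound the $\tau_k\to\infty$ limit by $1-1/\lambda$ using $\bar{\eta}_{k-1}\le 1$. Your substitution $u=\eta_{k-1}\lambda$, $v=\bar{\eta}_{k-1}\lambda$ packages the algebra more cleanly, and your direct comparison $B/D\le 1-1/\lambda$ is slightly more streamlined than the paper's intermediate step through $1-1/(\bar{\eta}_{k-1}\lambda)$, but the substance is identical.
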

\begin{proof}
	Since $r_{k-1}>0$, it suffices to examine the monotonicity of $\hat{\xi}_{k}$ with respect to $\tau_{k}$, where 
	\begin{equation*}
		\hat{\xi}_{k}=\frac{(\eta_{k-1}\lambda-1)+\tau_{k}\eta_{k-1}\lambda(\bar{\eta}_{k-1}\lambda-1)}{1+r_{k-1}\eta_{k-1}\lambda+\tau_{k}(1+r_{k-1}\bar{\eta}_{k-1}\eta_{k-1}\lambda^2)}.
	\end{equation*} 
Directly computing the derivative of $\hat{\xi}_{k}$ with respect to $\tau_{k}$ yields that 
\begin{align}
	\begin{aligned}
	\frac{d(\hat{\xi}_{k})}{d(\tau_{k})}&=\frac{(1+r_{k-1})\eta_{k-1}\bar{\eta}_{k-1}\lambda^2-r_{k-1}\eta_{k-1}^2\lambda^2-2\eta_{k-1}\lambda+1}{\big[1+r_{k-1}\eta_{k-1}\lambda+\tau_{k}(1+r_{k-1}\bar{\eta}_{k-1}\eta_{k-1}\lambda^2)\big]^2}\\
	&\ge \frac{(1+r_{k-1})\eta_{k-1}^2\lambda^2-r_{k-1}\eta_{k-1}^2\lambda^2-2\eta_{k-1}\lambda+1}{\big[1+r_{k-1}\eta_{k-1}\lambda+\tau_{k}(1+r_{k-1}\bar{\eta}_{k-1}\eta_{k-1}\lambda^2)\big]^2}\\
	&=\frac{(\eta_{k-1}\lambda-1)^2}{\big[1+r_{k-1}\eta_{k-1}\lambda+\tau_{k}(1+r_{k-1}\bar{\eta}_{k-1}\eta_{k-1}\lambda^2)\big]^2}\\
	&>0.
	\end{aligned}
\end{align}	
Therefore, $\hat{\xi}_{k}$ is monotonically increasing with respect to the regularization parameter $\tau_{k}$, which implies that the value of $\xi_{k}$ is also monotonically increasing with respect to $\tau_{k}$. Furthermore, given that
	\begin{equation}\label{equ:jie}
	\lim\limits_{\tau_{k}\to\infty}\xi_{k}=\frac{r_{k-1}\eta_{k-1}\lambda(\bar{\eta}_{k-1}\lambda-1)}{1+r_{k-1}\bar{\eta}_{k-1}\eta_{k-1}\lambda^2}<\frac{r_{k-1}\eta_{k-1}\bar{\eta}_{k-1}\lambda^2-r_{k-1}\eta_{k-1}\lambda}{r_{k-1}\bar{\eta}_{k-1}\eta_{k-1}\lambda^2}=1-\frac{1}{\bar{\eta}_{k-1}\lambda},
	\end{equation}
and  $\bar{\eta}_{k-1}\in[\frac{\gamma}{\lambda},1]$, we have
	\begin{equation}\label{equ:uplim}
	1-\frac{1}{\bar{\eta}_{k-1}\lambda}\in\big[1-\frac{1}{\gamma}, 1-\frac{1}{\lambda}\big].
	\end{equation}
	Based on the monotonicity of $\xi_{k}$ with respect to $\tau_{k}$ and Equations \eqref{equ:jie} and \eqref{equ:uplim}, we know that 
	\begin{equation*}\label{equ:condition}
	\xi_{k}\in\big(0, 1-\frac{1}{\lambda}\big)
	\end{equation*}
	holds for all $k\ge 1$. We complete the proof.
	
\end{proof}

It follows from \eqref{equ:gkk22} that
\begin{equation*}\label{equ:ratio}
|\g_{k+1}^{(2)}|<|\g_{k}^{(2)}|
\end{equation*}
with ratio $\xi_{k}$.
From \eqref{equ:nepsilon}, \eqref{equ:normgk}, \eqref{equ:gkk22}, and \eqref{equ:xik}, we know that 
\begin{equation}\label{rtio}
\frac{\|\g_{k+1}\|_{2}^{2}}{\|\g_{k}\|_{2}^{2}}=\xi_{k}^2\frac{r_{k+1}+1}{r_{k}+1}
\end{equation}
holds. From \eqref{rtio}, it follows that if  $\frac{r_{k+1}}{r_{k}}\le1$ for some $k\ge1$, then $\|\g_{k+1}\|_{2}$ is decreasing compared to $\|\g_{k}\|_{2}$; otherwise, $\|\g_{k+1}\|_{2}$ may increase. Further, it follows from \eqref{rtio} that the following relation  
\begin{equation}\label{prod}
\frac{\|\g_{k+N}\|_{2}^{2}}{\|\g_{k}\|_{2}^{2}}=\frac{\|\g_{k+1}\|_{2}^{2}}{\|\g_{k}\|_{2}^{2}}\frac{\|\g_{k+2}\|_{2}^{2}}{\|\g_{k+1}\|_{2}^{2}}\ldots\frac{\|\g_{k+N}\|_{2}^{2}}{\|\g_{k+N-1}\|_{2}^{2}}=\Big(\prod_{j=0}^{N-1}\xi_{k+j}^2\Big)\frac{r_{k+N}+1}{r_{k}+1}
\end{equation}
holds. From \eqref{prod} and $\xi_{k}\in(0,1-\frac{1}{\lambda})$, it follows that if it can be proven that there always exists an integer $N\ge1$ such that $\frac{r_{k+N}}{r_{k}}\le1$ for $k\ge1$, then the convergence of $\{\|\g_{k}\|_{2}\}$ can be established. To this end, we focus on studying the properties of difference equation \eqref{equ:nrecurion}. 

\subsection{Properties of solution \eqref{equ:nrecurion}}\label{sec:PS}
As mentioned in the introduction, in the BB-like methods, the magnitude of the gradient typically exhibits a non-monotonic oscillatory descent pattern. Usually in literature oscillatory behavior is studied only around $0$, whereas in applications mostly we need oscillation around some other point, e.g. at a stationary point $r^{*}$.

\begin{definition}\cite[p.3]{Agarwal2000Oscillation}
	The sequence $\{r_{k}\}_{k=0}^{\infty}$ is said to be strictly oscillatory around $0$, if for every $k\in\mathbb{N}:=\{1,2,\cdots\}$ there exists a $n\in\mathbb{N}$ such that $(r_{k})(r_{k+n})<0$. Further, $\{r_{k}\}$ is said to be non-oscillatory if it is eventually of constant sign.
\end{definition}

We perform the logarithmic transformation on Equation \eqref{equ:nrecurion}. Let
\begin{equation}\label{equ:log}
	y_{k}=\ln(r_{k})\quad\text{and}\quad m_{k}=\ln(h_{k}).
\end{equation}  
Then we obtain a second-order linear non-homogeneous difference equation with constant coefficient as follows
\begin{equation}\label{equ:secondDF}
	y_{k+2}-y_{k+1}+2y_{k}=m_{k}.
\end{equation}
Divide both sides of equation \eqref{equ:secondDF} by $(\sqrt{2})^{k+2}$, and let 
\begin{equation}\label{equ:ukk}
	u_{k}=\frac{y_{k}}{(\sqrt{2})^{k}}.
\end{equation}
Then we have
\begin{equation}\label{equ:secondDF2}
u_{k+2}-\frac{1}{\sqrt{2}}u_{k+1}+u_{k}=\widehat{m}_{k},
\end{equation}
where 
\begin{equation}\label{equ:mk}
	\widehat{m}_{k}=\frac{m_{k}}{(\sqrt{2})^{k+2}}.
\end{equation}
We explore the properties of equation \eqref{equ:secondDF2}. Let the solution of  \eqref{equ:secondDF2} be
\begin{equation}\label{equ:solution}
	u_{k}=u_{k}^{(g)}+u_{k}^{(p)},
\end{equation}
where $u_{k}^{(g)}$ is the general solution of the homogeneous equation and $u_{k}^{(p)}$ is the particular solution corresponding to the non-homogeneous term. 

By solving the characteristic equation of \eqref{equ:secondDF2} as follows
\begin{equation*}
q^2-\frac{1}{\sqrt{2}}q+1=0,
\end{equation*}
it can be known that its two solutions are the following complex numbers
\begin{equation*}
	q=e^{\pm i\theta},
\end{equation*}
where $\theta=\arccos{(\frac{1}{\sqrt{8}})}$, and their magnitudes are $1$. Therefore, the homogeneous general solution of equation \eqref{equ:secondDF2} can be expressed as 
\begin{equation}\label{equ:general}
	u_{k}^{(g)}=c_{1}\cos{(k\theta)}+c_{2}\sin{(k\theta)},
\end{equation}
where the values of $c_{1}$ and $c_{2}$ are determined by the initial iterates. 

The next task is to find the particular solution for the non-homogeneous term. We solve it using the variation of constants method. Based on the form of homogeneous solution \eqref{equ:general}, we seek a particular solution of the following form
\begin{equation}\label{equ:ukp}
	u_{k}^{(p)}=C_{k}\cos{(k\theta)}+D_{k}\sin{(k\theta)},
\end{equation}
where $C_{k}$ and $D_{k}$ are the undetermined sequences. The variation of constants method requires that equation \eqref{equ:ukp} satisfies the following constraint
\begin{equation}\label{equ:DD}
\begin{cases}
\Delta C_{k}\cos{((k+1)\theta)}+\Delta D_{k}\sin{((k+1)\theta)}=0,\\
-\Delta C_{k}\sin{((k+1)\theta)}+\Delta D_{k}\cos{((k+1)\theta)}=\widehat{m}_{k},
\end{cases}	
\end{equation}
where $\Delta C_{k}=C_{k+1}-C_{k}$ and $\Delta D_{k}=D_{k+1}-D_{k}$. In particular, the Casorati determinant corresponding to the coefficient matrix of the linear system \eqref{equ:DD} is $1$ as follows
\begin{equation}
\left|
\begin{array}{ccc}
 \cos{((k+1)\theta)} & \sin{((k+1)\theta)}\\
 -\sin{((k+1)\theta)} & \cos{((k+1)\theta)}
\end{array}
\right|=1.
\end{equation}
Hence, \eqref{equ:DD} admits a unique solution
\begin{equation}
	\Delta C_{k}=-\widehat{m}_{k}\sin{((k+1)\theta)}\quad\text{and}\quad\Delta D_{k}=\widehat{m}_{k}\cos{((k+1)\theta)}.
\end{equation}
From the definitions of $\Delta C_{k}$ and $\Delta D_{k}$, we have
\begin{equation}
	C_{k}=C_{0}-\sum_{j=0}^{k-1}\widehat{m}_{j}\sin{((j+1)\theta)}\quad\text{and}\quad D_{k}=D_{0}+\sum_{j=0}^{k-1}\widehat{m}_{j}\cos{((j+1)\theta)}.
\end{equation}
It is customary to set $C_{0}=D_{0}=0.$ Therefore, the particular solution of non-homogeneous term
\begin{align}\label{equ:particular}
	\begin{aligned}
	u_{k}^{(p)}&=-\cos{(k\theta)}\sum_{j=0}^{k-1}\widehat{m}_{j}\sin{((j+1)\theta)}+\sin{(k\theta)}\sum_{j=0}^{k-1}\widehat{m}_{j}\cos{((j+1)\theta)}\\
	&=\sum_{j=0}^{k-1}\widehat{m}_{j}\sin{((k-j-1)\theta)}.
	\end{aligned}
\end{align}

From \eqref{equ:general} and \eqref{equ:particular}, we have the solution  \eqref{equ:solution} as follows
\begin{equation}\label{solution}
	u_{k}=c_{1}\sin{(k\theta)}+c_{2}\cos{(k\theta)}+\sum_{j=0}^{k-1}\widehat{m}_{j}\sin{((k-j-1)\theta)}.
\end{equation}
It follows from the boundedness of $h_{k}$ in \eqref{equ:hk} and the definition of $\widehat{m}_{k}$ in \eqref{equ:mk} that $\widehat{m}_{k}$ is exponentially decaying overall and satisfies $\sum_{k=0}^{\infty}|\widehat{m}_{k}|<\infty$, which implies that $\widehat{m}_{k}$ is absolutely integrable. Define 
\begin{equation}\label{equ:S}
	S=\sum_{k=0}^{\infty}\widehat{m}_{k}e^{-i(k+1)\theta}=e^{-i\theta}\sum_{k=0}^{\infty}\widehat{m}_{k}e^{-ik\theta}.
\end{equation}
For convenience, we denote 
\begin{equation}
	S=|S|e^{iArg(S)},
\end{equation}
where $|S|$ represents the modulus of $S$, and $Arg(S)\in(-\pi,\pi]$ denotes the argument of $S$. We consider the asymptotic behavior of the particular solution. From \eqref{equ:particular}, we have
\begin{align*}
\begin{aligned}
u_{k}^{(p)}&=\sum_{j=0}^{k-1}\widehat{m}_{j}\sin{((k-j-1)\theta)}\\
&=\Im\big(\sum_{j=0}^{k-1}\widehat{m}_{j}e^{i(k-j-1)\theta}\big)\\
&=\Im\big(e^{i(k-1)\theta}\sum_{j=0}^{k-1}\widehat{m}_{j}e^{-ij\theta}\big),
\end{aligned}
\end{align*}
where $\Im$ denotes the imaginary part of a complex number. When $k$ is a large number such that $\sum_{j=0}^{k-1}\widehat{m}_{j}e^{-ij\theta}\rightarrow\sum_{j=0}^{\infty}\widehat{m}_{j}e^{-ij\theta}=e^{i\theta}S$ from \eqref{equ:S}. Then, we have 
\begin{equation*}
	e^{i(k-1)\theta}\sum_{j=0}^{k-1}\widehat{m}_{j}e^{-ij\theta}\rightarrow e^{i(k-1)\theta} \sum_{j=0}^{\infty}\widehat{m}_{j}e^{-ij\theta}=e^{ik\theta}S=|S|e^{i(k\theta+Arg(S))}.
\end{equation*}
Therefore, we have
\begin{equation}
	u_{k}^{(p)}=|S|\sin{(k\theta+Arg(S))}+\epsilon_{k},
\end{equation}
where $\epsilon_{k}\rightarrow0$ as $k\rightarrow\infty$. Thus, the solution to equation \eqref{equ:secondDF2} can be expressed as 
\begin{align}
\begin{aligned}
u_{k}&=c_{1}\sin{(k\theta)}+c_{2}\cos{(k\theta)}+|S|\sin{(k\theta+Arg(S))}+\epsilon_{k}\\
&=A\cos{(k\theta)}+B\sin{(k\theta)}+\epsilon_{k},
\end{aligned}
\end{align}
where $A=c_{1}+|S|\sin{(Arg(S))}$ and $B=c_{2}+|S|\cos{(Arg(S))}$. In the general case, $A$ and $B$ are not both $0$. Hence, for sufficiently large $k$, the behavior of $u_{k}$ is dominated by $A\cos{(k\theta)}+B\sin{(k\theta)}$ and can be uniformly expressed in the following form
\begin{align}\label{equ:uk}
	\begin{aligned}
	u_{k}=\sqrt{A^2+B^2}\cos{(\phi+k\theta)},
	\end{aligned}
\end{align}
where $\phi=\arccos{(\frac{A}{\sqrt{A^2+B^2}})}$ and $\theta=\arccos{(\frac{1}{\sqrt{8}})}$. Clearly, Equation \eqref{equ:uk} is a Chebyshev-like polynomial. According to the results in \cite[Theorem 1.3.3]{Agarwal2000Oscillation}, we know that $u_{k}$ is oscillatory. Therefore, $u_{k}$ alternates in sign infinitely. That is, $u_{k}$ oscillates around $0$ in general.

Based on transformations \eqref{equ:log} and \eqref{equ:ukk}, in the asymptotic sense, the value of $r_{k}$ can be expressed as 
\begin{equation}\label{equ:rk}
	r_{k}=e^{\sqrt{A^2+B^2}(\sqrt{2})^k\cos{(\phi+k\theta)}}.
\end{equation}
Since the exponential function is monotonically increasing and $e^{0}=1$, it follows that $r_{k}$ oscillates around $1$ with an increasing amplitude. However, the oscillations of $r_{k}$ become irregular due to the influence of the initial iterates, as well as the condition number and eigenvalues distribution of the Hessian matrix, and such information is embedded within $A$ and $B$. Nevertheless, as long as $r_{k}$ shows such oscillatory behavior with increasing amplitude overall, the desired result can be derived, as shown in the next section. 

\subsection{Convergence analysis for any-dimensional quadratics}\label{sec:2.1}
In this section, we will employ the results obtained in the previous section to establish the $R$-linear convergence of the RBB method.
\begin{lemma}\label{lemma:lem1}
	Assume that $r_{k}>0$ for some $k\ge1$. Then there exists an integer $N\ge 1$ such that $$\frac{r_{k+N}}{r_{k}}\le1.$$ 
\end{lemma}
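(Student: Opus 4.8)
The plan is to reduce Lemma~\ref{lemma:lem1} to the single assertion $\liminf_{j\to\infty}r_j=0$. Granting this, fix the given $k\ge1$ with $r_k>0$: since $\liminf_j r_j=0<r_k$ there is an index $j>k$ with $r_j<r_k$, and setting $N:=j-k\ge1$ yields $r_{k+N}/r_k<1\le1$, which is the claim. So the entire content is to show that $r_j$ returns arbitrarily close to $0$ infinitely often.

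To prove $\liminf_j r_j=0$ I would use the asymptotic description \eqref{equ:uk}--\eqref{equ:rk} from Section~\ref{sec:PS}: with $y_j=\ln r_j$ and $u_j=y_j/(\sqrt2)^j$ one has $u_j=\sqrt{A^2+B^2}\cos(\phi+j\theta)+\epsilon_j$, where $\epsilon_j\to0$ and $\theta=\arccos(1/\sqrt8)$. The key arithmetic input is that $\theta$ is an irrational multiple of $\pi$: if $\theta/\pi$ were rational, then $2\cos\theta=1/\sqrt2$ would be an algebraic integer (a sum $e^{i\theta}+e^{-i\theta}$ of two roots of unity), whereas $1/\sqrt2$ satisfies the non-monic polynomial $2x^2-1$ and no monic integer polynomial, a contradiction. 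Hence $(j\theta\bmod2\pi)_{j\ge1}$ is dense in $[0,2\pi)$, so $\cos(\phi+j\theta)$ approaches $-1$ along an infinite subsequence $j_1<j_2<\cdots$. In the generic case $A^2+B^2>0$ this gives $u_{j_\ell}\to-\sqrt{A^2+B^2}<0$, and since $(\sqrt2)^{j_\ell}\to\infty$ we obtain $y_{j_\ell}=(\sqrt2)^{j_\ell}u_{j_\ell}\to-\infty$, i.e.\ $r_{j_\ell}\to0$; thus $\liminf_j r_j=0$ and, by the first paragraph, the lemma follows. Note this uses the amplitude growth (the factor $(\sqrt2)^j$) in an essential way, not merely the oscillation of $u_j$: plain sign changes of $u_j$ would only give $r_j<1$ infinitely often, which is too weak since the index $k$ in the lemma may itself correspond to an arbitrarily small $r_k$.

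I expect the principal difficulty to lie in the degenerate case $A=B=0$ that is explicitly set aside right after \eqref{equ:uk}. There the leading oscillatory term of $u_j$ vanishes, the amplitude of $r_j$ about $1$ need no longer grow, and the density argument above is vacuous; in fact, if $\{r_j\}$ were bounded and attained its infimum at the given index $k$, the statement would fail at that single $k$. A fully rigorous treatment therefore needs either a proof that $(A,B)=(0,0)$ cannot occur along a genuine RBB trajectory, or a direct analysis of the recursion $r_{j+1}=h_{j-1}r_j/r_{j-1}^2$ coming from \eqref{equ:nrecurion}, exploiting the uniform bound \eqref{equ:hk} on $h_{j-1}$ together with the summability $\sum_j|\widehat m_j|<\infty$ (itself a consequence of \eqref{equ:hk}) to exclude a monotonically increasing tail. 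This is the step where I would concentrate the effort; a secondary point, already handled above, is that only \emph{density}---not full equidistribution---of $(j\theta\bmod2\pi)$ is required, so the algebraic-integer argument is the real substance of the irrationality claim.
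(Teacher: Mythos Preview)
Your approach is essentially the paper's: both reduce the lemma to the single claim $\liminf_j r_j=0$ and derive that from the asymptotic expression \eqref{equ:rk}, using the exponential growth $(\sqrt2)^j$ of the amplitude. The paper phrases this informally, arguing that the local minima of $\{r_j\}$ deviate further and further below $1$ and hence satisfy $\liminf_j l_j=0$; your version is the same skeleton but with an added layer of rigor the paper omits. In particular, your algebraic-integer proof that $\theta=\arccos(1/\sqrt8)$ is an irrational multiple of $\pi$ (hence $(j\theta\bmod2\pi)$ is dense and $\cos(\phi+j\theta)$ comes arbitrarily close to $-1$) fills a step the paper simply asserts, and your observation that mere sign changes of $u_j$ would only yield $r_j<1$ infinitely often---too weak for the lemma---is a point the paper's ``amplitude is increasing'' language glosses over.

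You are also right to isolate the degenerate case $A=B=0$ as the one genuine gap. The paper dismisses it with the clause ``in the general case, $A$ and $B$ are not both $0$'' just before \eqref{equ:uk} and never returns to it, so your proof is no less complete than the paper's on this point. Your suggested route for closing it---working directly with the recursion \eqref{equ:nrecurion} and the bound \eqref{equ:hk}---is the natural one, but be aware that the paper does not carry it out either.
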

\begin{proof}
Since the sequence $\{r_{k}\}$ oscillates around $1$, it possesses infinitely many local minima, which are denoted as $\{l_{j}\}, (j=1,2,\ldots)$. According to the asymptotic expression \eqref{equ:rk} of $r_{k}$, the amplitude of $r_{k}$ shows an overall increasing trend, with an asymptotic growth of order $\sqrt{2}$. This indicates that the degree to which the maxima or minima of the sequence $\{r_{k}\}$ deviate from $1$ is asymptotically increasing, and therefore the sequence of local minima exhibits an overall decreasing trend. By the definition \eqref{equ:nepsilon}, $r_{k}$ is bounded below by $0$. Hence, we have 
\begin{equation}\label{equ:liminf}
	\liminf_{j\to\infty} \{l_{j}\}=0.
\end{equation}
For a given $r_{k}>0$, due to the persistence of the oscillation and the limitation \eqref{equ:liminf}, there must exist some local minimum $l_{j^{'}}\le r_{k}$ after the index $k$. Hence, we can choose the index $k^{'}$ corresponding to $l_{j^{'}}$ and let $N=k^{'}-k$. Then we have  $r_{k+N}=l_{j^{'}}\le r_{k}$. This completes the proof.  
\end{proof}

\begin{remark}
	Generally, when the algorithm converges, $r_{k}$ tend to or equals zero, implying that $\|\g_{k}^{1}\|_{2}=0$. Consequently, from \eqref{equ:nnRBB}, we obtain $\alpha_{k+1}^{RBB}=1$, which finally eliminates the gradient component corresponding to the smallest eigenvalue of Hessian matrix, leading to $\|\g_{k+2}\|_2=0$.
\end{remark}

\begin{theorem}
	Let $f(\x)$ be a strictly convex quadratic function. Let $\{\x_{k}\}$ be the sequence generated by the RBB method. Then, either $\g_{k}=\xo$ for some finite $k$, or the sequence $\{\|\g_{k}\|_{2}\}$ converges to zero $R$-linearly.
\end{theorem}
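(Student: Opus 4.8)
The strategy is to split $\g_{k}$ into the block $\g_{k}^{1}\in\mathbb{R}^{n-1}$ and the scalar $\g_{k}^{(n)}$ (the component along the smallest eigenvalue of $A$), to establish $R$-linear decay of $(\g_{k}^{(n)})^{2}$ outright, and then to show that the ratio $r_{k}=\|\g_{k}^{1}\|_{2}^{2}/(\g_{k}^{(n)})^{2}$ cannot grow fast enough to spoil it. Before anything else I would dispose of the degenerate cases. If $\g_{k}^{1}=\xo$ for some $k$, then \eqref{equ:nnRBB} gives $\alpha_{k+1}^{RBB}=1$ and, as in the Remark following Lemma~\ref{lemma:lem1}, $\g_{k+2}=\xo$; this is the finite-termination alternative. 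If instead $\g_{k}^{(n)}=0$ for some $k$, then $\g_{j}^{(n)}=0$ for all $j\ge k$, and from step $k$ onward the iteration is an RBB iteration for the $(n-1)$-dimensional quadratic with Hessian $A_{1}$, so the claim follows by induction on $n$ (the case $n=1$ terminating in one step). Hence it suffices to treat the case $r_{k}>0$ for all $k$, in which the whole development of Sections~\ref{sec:2.0} and \ref{sec:PS} applies.

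In that case, iterating \eqref{equ:gkk22} and invoking Theorem~\ref{prop:increasing} gives
\[
(\g_{k}^{(n)})^{2}=\Big(\prod_{j=1}^{k-1}\xi_{j}^{2}\Big)(\g_{1}^{(n)})^{2}\le\Big(1-\tfrac{1}{\lambda}\Big)^{2(k-1)}(\g_{1}^{(n)})^{2},
\]
so the component along the smallest eigenvalue already converges $R$-linearly with ratio at most $1-1/\lambda$; what remains is to control $\|\g_{k}^{1}\|_{2}^{2}=r_{k}(\g_{k}^{(n)})^{2}$. I would build a subsequence by $k_{0}=1$ and $k_{i+1}=k_{i}+N_{i}$, where $N_{i}\ge 1$ is chosen, via Lemma~\ref{lemma:lem1}, so that $r_{k_{i+1}}\le r_{k_{i}}$. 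Then $(r_{k_{i+1}}+1)/(r_{k_{i}}+1)\le 1$, and the product formula \eqref{prod} together with $\xi_{k}\in(0,1-1/\lambda)$ yields $\|\g_{k_{i+1}}\|_{2}^{2}\le(1-1/\lambda)^{2N_{i}}\|\g_{k_{i}}\|_{2}^{2}$; telescoping gives $\|\g_{k_{i}}\|_{2}^{2}\le(1-1/\lambda)^{2(k_{i}-1)}\|\g_{1}\|_{2}^{2}$.

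To pass from this subsequence to the full sequence I would use two facts. First, a uniform per-iteration bound: since $I\preceq A\preceq\lambda I$, \eqref{equ:nnRBB} gives $\alpha_{k}^{RBB}\in[1,\lambda]$, whence $\|I-(\alpha_{k}^{RBB})^{-1}A\|_{2}\le\lambda-1$ and $\|\g_{k+1}\|_{2}\le(\lambda-1)\|\g_{k}\|_{2}$ for every $k$. Second, the return times $N_{i}$ can be taken uniformly bounded by a constant $M$: by the asymptotic form \eqref{equ:rk}, for large $k$ the sign and magnitude of $\ln r_{k}$ are governed by $\cos(\phi+k\theta)$ with $\theta=\arccos(1/\sqrt{8})$ a \emph{fixed} angular frequency, so within every window of fixed length $\{r_{k}\}$ drops to a value no larger than its value at the start of the window; the finitely many iterates before this asymptotic regime are absorbed into $M$ by applying Lemma~\ref{lemma:lem1} to each of them. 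Combining, for $k_{i}\le k<k_{i+1}$,
\[
\|\g_{k}\|_{2}\le(\lambda-1)^{k-k_{i}}\|\g_{k_{i}}\|_{2}\le(\lambda-1)^{M}\Big(1-\tfrac{1}{\lambda}\Big)^{k_{i}-1}\|\g_{1}\|_{2}\le c\,\Big(1-\tfrac{1}{\lambda}\Big)^{k},
\]
with $c=(\lambda-1)^{M}(1-1/\lambda)^{-M-1}\|\g_{1}\|_{2}$, which is precisely $R$-linear convergence with ratio $1-1/\lambda\in(0,1)$.

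The main obstacle is the uniform bound on the return times $N_{i}$. Lemma~\ref{lemma:lem1} by itself only guarantees the existence of a return time, and if the $N_{i}$ were unbounded the per-iteration factor $\lambda-1>1$ would ruin the interpolation in the last display. Making ``bounded return time'' precise is exactly where the explicit oscillatory description of $\{r_{k}\}$ from Section~\ref{sec:PS} is needed — one must use that the angular increment $\theta$ is a constant, so that (asymptotically) a local minimum of $\{r_{k}\}$, or at least a descent below the current level, occurs every $O(1)$ iterations — together with a routine separate treatment of the finitely many initial iterates. Everything after that is elementary.
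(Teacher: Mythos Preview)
Your core strategy coincides with the paper's: invoke Lemma~\ref{lemma:lem1} to obtain $N$ with $r_{k+N}\le r_{k}$, feed this into the product formula \eqref{prod}, and use $\xi_{k}<1-1/\lambda$ from Theorem~\ref{prop:increasing}. The paper's own proof in fact stops right there --- it records $\|\g_{k+N}\|_{2}\le(1-1/\lambda)^{N}\|\g_{k}\|_{2}$ and declares $R$-linear convergence without ever addressing whether $N=N(k)$ can be taken uniformly bounded. You go considerably further: you dispose of the degenerate cases $\g_{k}^{1}=\xo$ and $\g_{k}^{(n)}=0$ explicitly, you make the subsequence construction $k_{i+1}=k_{i}+N_{i}$ and the interpolation via the crude per-step bound $\|\g_{k+1}\|_{2}\le(\lambda-1)\|\g_{k}\|_{2}$ explicit, and --- most importantly --- you correctly isolate the uniform return-time bound as the real crux and argue for it from the fixed angular frequency $\theta=\arccos(1/\sqrt{8})$ in the asymptotic description \eqref{equ:rk}. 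In that sense your write-up is more complete than the paper's. The one place where your argument remains heuristic is precisely the one you flag: turning ``$\theta$ is fixed, so a descent occurs every $O(1)$ steps'' into a rigorous uniform $M$ still requires controlling the error term $\epsilon_{k}$ in the passage from \eqref{solution} to \eqref{equ:uk} and treating the exceptional case $A=B=0$ mentioned just before \eqref{equ:uk}; the paper does not supply those details either.
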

\begin{proof}
	We only need to consider the case that $\g_{k}\neq\xo$ for all $k\ge1$. 
	From Lemma \ref{lemma:lem1} and \eqref{prod}, we know that there exists an integer $N\ge1$ such that  
	\begin{align}\label{equ:A}
	\begin{split}
	\frac{\|\g_{k+N}\|_{2}^{2}}{\|\g_{k}\|_{2}^{2}}&=\Big(\prod_{j=0}^{N-1}\xi_{k+j}^2\Big)\frac{r_{k+N}+1}{r_{k}+1}\\
	&\le\prod_{j=0}^{N-1}\xi_{k+j}^2,
	\end{split}
	\end{align}
	where each $\xi_{k+j}\in(0,1-\frac{1}{\lambda})$.  Then we have $\|\g_{k+N}\|_{2}\le \theta_{N}\|\g_{k}\|_{2}$, where $\theta_{N}=(1-\frac{1}{\lambda})^N$. Therefore, the sequence $\{\|\g_{k}\|_{2}\}$ converges to zero $R$-linearly. This completes our proof.	
\end{proof}

\subsubsection{Several special cases}\label{sec:SpecialCase}
In this subsection, we explore several special cases of $\alpha_{k}^{RBB}$ when $n=2$. Let
\begin{equation*}\label{equ:AA}
A=\begin{bmatrix}
\lambda & 0\\
0 & 1
\end{bmatrix},
\end{equation*}
with $\lambda>1$. In this case, we have
\begin{equation}\label{case2}
	\eta_{k-1}=\bar{\eta}_{k-1}=\eta_{k}=\bar{\eta}_{k}=1.
\end{equation} 
From \eqref{equ:nnRBB} and \eqref{case2}, we have 
\begin{align}\label{equ:2RBB}
\begin{split}
\alpha_{k}^{RBB}
&=\frac{1+\lambda r_{k-1}+\tau_{k}(1+\lambda^{2}r_{k-1})}{1+r_{k-1}+\tau_{k}(1+\lambda r_{k-1})}.
\end{split}
\end{align}
From \eqref{equ:betak}, \eqref{equ:xik}, and \eqref{case2}, we have  
\begin{align}\label{hghg}
\begin{split}
\bar{\beta}_{k}&=\frac{(\lambda-1)^2(1+\tau_{k})^2}{\big[1+r_{k-1}\lambda+\tau_{k}(1+r_{k-1})\lambda^2\big]^2}
\end{split}
\end{align}
\begin{align}\label{ghhg}
\begin{split}
\bar{\xi}_{k}=\frac{(\lambda-1) r_{k-1}+\tau_{k}(\lambda^2r_{k-1}-\lambda r_{k-1})}{1+\lambda r_{k-1}+\tau_{k}(\lambda^2 r_{k-1}+1)}\in(0,1-\frac{1}{\lambda}).
\end{split}
\end{align}
From \eqref{equ:nrecurion} and \eqref{case2}, we have 
\begin{equation}\label{equ:2dim}
\frac{(r_{k-1})^{2}r_{k+1}}{r_{k}}=\bar{h}_{k-1},
\end{equation}
where
\begin{equation}\label{lambar}
\bar{h}_{k-1}=\frac{(1+\tau_{k})^2}{(1+\tau_{k}\lambda)^2}\in(\frac{1}{\lambda^2},1].
\end{equation}

When $\tau_{k}=0$, i.e., $\alpha_{k}^{RBB}=\alpha_{k}^{BB1}$. In this case, $\bar{h}_{k-1}=1$,
\begin{equation}\label{equ:BB1}
\frac{(r_{k-1})^2r_{k+1}}{r_{k}}=1.
\end{equation}
Following the analysis in Section \ref{sec:PS}, the solution to equation \eqref{equ:BB1} can be expressed as 
\begin{equation}\label{equ:BB1solu}
	r_{k}=e^{2\sqrt{a^2+b^2}(\sqrt{2})^k\cos\big(\phi+k\arccos(\frac{1}{\sqrt{8}})\big)}
\end{equation}
where $\phi=\arccos(\frac{a}{\sqrt{a^2+b^2}})$, $a$ and $b$ depend on the initial iterates. Compared with \eqref{equ:rk}, here we restrict the analysis to the two-dimensional case. Consequently, factors such as the distribution of eigenvalues of the Hessian matrix no longer affect the non-homogeneous term, yielding a simpler result. Nevertheless, the fundamental structures of \eqref{equ:rk} and \eqref{equ:BB1solu} remain consistent, and thus their properties are essentially the same in general. To intuitively demonstrate the oscillatory behavior of $r_{k}$, Figure \ref{fig:arctan7} displays the variation of the function $\cos\big(k\arccos(\frac{1}{\sqrt{8}})\big)$ over the interval $k\in[1, 100]$. The numeric label adjacent to the small boxes in the figure indicate the corresponding iteration number $k$, which facilitates the examination of properties such as the oscillation period.

\begin{figure}[!h]
	\centering
	\subfigure{
		\includegraphics[width=0.8\textwidth]{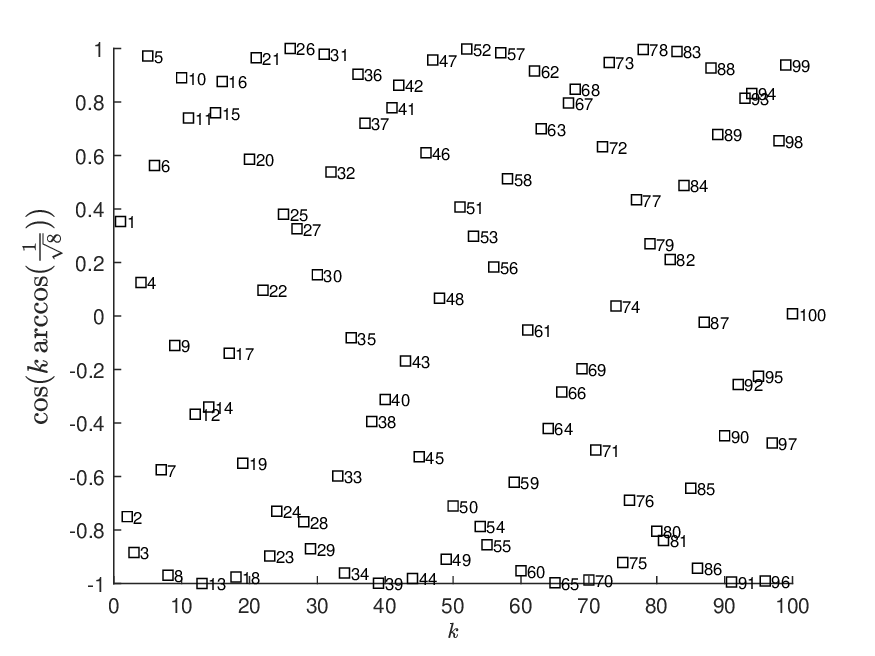}}
	\caption{\textit{The figure of the function $\cos(k\arccos(\frac{1}{\sqrt{8}}))$ for $k$ ranging from $1$ to $100$.}}	
	\label{fig:arctan7}
\end{figure}

Here, $\cos\big(k\arccos(\frac{1}{\sqrt{8}})\big)$ corresponds to the expression of the Chebyshev polynomial of the first kind evaluated at $x=\frac{1}{\sqrt{8}}$. The results in Figure \ref{fig:arctan7} clearly demonstrate that  $\cos\bigl(k\arccos(\frac{1}{\sqrt{8}})\bigr)$
oscillates around zero. Furthermore, the sign of $\cos\bigl(k\arccos(\frac{1}{\sqrt{8}})\bigr)$ changes approximately every three iterations. Consequently, the corresponding $r_{k}$ oscillates about $1$, which leads to a non-monotonic decrease in $\|\g_{k}\|_{2}$.


When $\tau_{k} \in [0,\infty)$, which corresponds to $\alpha_{k}^{RBB} \in [\alpha_{k}^{BB1}, \alpha_{k}^{BB2})$, we obtain from \eqref{equ:2dim} that $r_{k}$ and $\bar{h}_{k}$ satisfy the following properties
\begin{align}\label{equ:recursion}
\begin{split}
&(r_{k})^{2}r_{k+1}r_{k+3}=\bar{h}_{k}\bar{h}_{k+1},\\
&(r_{k})^{2}r_{k+1}(r_{k+2})^2r_{k+4}=\bar{h}_{k}\bar{h}_{k+1}\bar{h}_{k+2},\\
&(r_{k})^{2}r_{k+1}(r_{k+2})^2(r_{k+3})^2r_{k+5}=\bar{h}_{k}\bar{h}_{k+1}\bar{h}_{k+2}\bar{h}_{k+3},\\
&\ldots,\\
&(r_{k})^{2}r_{k+1}(r_{k+2})^2(r_{k+3})^2\cdots(r_{k+N})^2r_{k+N+2}=\bar{h}_{k}\bar{h}_{k+1}\bar{h}_{k+2}\bar{h}_{k+3}\cdots\bar{h}_{k+N}.\\
\end{split}
\end{align}
From an intuitive perspective, properties \eqref{equ:recursion} indicate that the sequence ${r_k}$ in the RBB method (for $\tau_k \in [0,\infty)$) exhibits weaker oscillation compared with the BB1 method. Based on these properties, it can also be shown that the sequence $\{\|\g_k\|_2\}$ converges to zero with an $R$-linear rate.

\begin{lemma}\label{lemma:lem111}
	Assume that $r_{k}>0$ for some $k\ge 1$. Then there exists an integer $N\ge 1$ such that $$\frac{r_{k+N}}{r_{k}}\le1.$$ 
\end{lemma}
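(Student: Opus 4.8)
The plan is to mirror the argument of Lemma \ref{lemma:lem1}, since the two-dimensional recursion \eqref{equ:2dim} has exactly the same structure as the general recursion \eqref{equ:nrecurion}. First I would observe that, for $\tau_{k}\in[0,\infty)$, the multiplier in \eqref{equ:2dim} satisfies $\bar{h}_{k-1}=(1+\tau_{k})^{2}/(1+\tau_{k}\lambda)^{2}\in(1/\lambda^{2},1]$ by \eqref{lambar}, hence is bounded; after the logarithmic transformation \eqref{equ:log} and the rescaling \eqref{equ:ukk}, the non-homogeneous term $\widehat{m}_{k}=\ln(\bar{h}_{k})/(\sqrt{2})^{k+2}$ is again exponentially decaying and absolutely summable. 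Therefore the entire derivation of Section \ref{sec:PS} — the unit-modulus characteristic roots $e^{\pm i\theta}$, the variation-of-constants particular solution, and the resulting asymptotic form \eqref{equ:uk}–\eqref{equ:rk} — carries over verbatim, so $r_{k}$ oscillates around $1$ with an amplitude growing asymptotically like $(\sqrt{2})^{k}$. In fact $\ln(\bar{h}_{k})\le 0$ here, so the non-homogeneous forcing only damps the particular part while leaving the homogeneous $(\sqrt{2})^{k}$ growth untouched; this is the precise sense in which the RBB oscillation is weaker than that of BB1 yet still has unbounded amplitude.

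Given this, the second step is identical to the proof of Lemma \ref{lemma:lem1}: the sequence $\{r_{k}\}$ has infinitely many local minima $\{l_{j}\}$, and because the oscillation amplitude of $r_{k}$ grows without bound while $r_{k}$ is bounded below by $0$ by definition \eqref{equ:nepsilon}, the local minima satisfy $\liminf_{j\to\infty} l_{j}=0$. Hence for any fixed index $k$ with $r_{k}>0$ there is a local minimum $l_{j'}\le r_{k}$ occurring at some later index $k'>k$; setting $N=k'-k\ge 1$ gives $r_{k+N}=l_{j'}\le r_{k}$, which is the claim.

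As an alternative quick route in the easy regime $r_{k}\ge 1$, I would also record a purely algebraic shortcut from \eqref{equ:recursion}: since every $\bar{h}_{k+j}\le 1$, the first telescoped identity $(r_{k})^{2}r_{k+1}r_{k+3}=\bar{h}_{k}\bar{h}_{k+1}\le 1$ is violated once $r_{k+1},r_{k+3}>r_{k}\ge 1$, so already one of $N=1,3$ works in that case. The genuinely delicate regime is $r_{k}<1$, where the telescoped identities \eqref{equ:recursion} alone are insufficient — they stay consistent with $r_{k+j}>r_{k}$ for all $j$ — and one really must invoke the unbounded-amplitude oscillation established above. Thus the main obstacle is not the final deduction but justifying that the Section \ref{sec:PS} asymptotic analysis transfers to \eqref{equ:2dim} for all $\tau_{k}\in[0,\infty)$, which reduces to the boundedness of $\bar{h}_{k}$ and hence the summability of $\widehat{m}_{k}$, together with the (standard, as in Lemma \ref{lemma:lem1}) step that increasing amplitude plus the floor $r_{k}>0$ drives the local minima down to $0$.
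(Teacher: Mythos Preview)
Your proposal is correct but follows a genuinely different route from the paper. The paper does \emph{not} re-run the Section~\ref{sec:PS} oscillation analysis; instead it argues by contradiction directly from the telescoped identities \eqref{equ:recursion}: assuming $r_{k+N}>r_{k}$ for every $N\ge 1$, the left side of the last line of \eqref{equ:recursion} exceeds $(r_{k})^{2N+2}$, while the right side equals $\prod_{j=0}^{N}\bar{h}_{k+j}$, which the paper asserts tends to $0$ by \eqref{lambar}, yielding the contradiction. So the paper's proof is a three-line purely algebraic argument, whereas yours imports the full dynamical-systems machinery of Section~\ref{sec:PS}. What your approach buys is robustness at the endpoint $\tau_{k}=0$: the paper's key step $\prod_{j}\bar{h}_{k+j}\to 0$ requires the factors $\bar{h}_{k+j}<1$ to stay bounded away from $1$, which fails when $\tau_{k}\equiv 0$ (then each $\bar{h}_{k+j}=1$), precisely the $r_{k}<1$ difficulty you identified; your oscillation argument covers that case uniformly. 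Conversely, when $\tau_{k}$ is bounded away from $0$ the paper's argument is shorter and avoids the asymptotic expansion \eqref{equ:uk}--\eqref{equ:rk} and its implicit genericity assumption that the amplitude constants $A,B$ are not both zero.
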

\begin{proof}
	Assume, for the purpose of deriving a contradiction, that for all $N\ge1$,
	\begin{equation}\label{equ:aassume}
	\frac{r_{k+N}}{r_{k}}>1.
	\end{equation}
	It follows from  \eqref{equ:aassume} that 
	\begin{align}\label{equ:ineq}
	\begin{split}
	&(r_{k})^{2}r_{k+1}(r_{k+2})^2(r_{k+3})^2\cdots(r_{k+N})^2r_{k+N+2}\\
	&>(r_{k})^2r_{k}(r_{k})^2(r_{k})^2\cdots(r_{k})^2r_{k}=(r_{k})^{2N+2}.
	\end{split}
	\end{align}
	From \eqref{lambar} and \eqref{equ:recursion}, we know that 
	\begin{equation}\label{equ:eq}
	(r_{k})^{2}r_{k+1}(r_{k+2})^2(r_{k+3})^2\cdots(r_{k+N})^2r_{k+N+2}=\bar{h}_{k}\bar{h}_{k+1}\cdots\bar{h}_{k+N}\rightarrow 0, 
	\end{equation}
	as $N\rightarrow\infty$. From \eqref{equ:ineq} and \eqref{equ:eq}, our assumption \eqref{equ:aassume} must be false, which completes the proof.
\end{proof}

\begin{theorem}\label{lemma:lem11}
	Let $f(\x)$ be a strictly convex quadratic function. Let $\{\x_{k}\}$ be the sequence generated by the RBB method. Then, either $\g_{k}=\xo$ for some finite $k$, or the sequence $\{\|\g_{k}\|_{2}\}$ converges to zero $R$-linearly.
\end{theorem}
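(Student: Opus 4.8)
The plan is to mirror the argument already used for the general any-dimensional case, but now invoking the sharper recursion structure \eqref{equ:recursion} available in the two-dimensional special cases. As before, we may restrict to the case $\g_k\neq\xo$ for all $k\ge1$, since otherwise the conclusion is trivial. The backbone is the product identity \eqref{prod}, which expresses $\|\g_{k+N}\|_2^2/\|\g_k\|_2^2$ as $\big(\prod_{j=0}^{N-1}\bar{\xi}_{k+j}^2\big)(r_{k+N}+1)/(r_k+1)$, together with the uniform bound $\bar{\xi}_k\in(0,1-\tfrac1\lambda)$ from \eqref{ghhg}. So everything reduces to producing, for each $k\ge1$, an index $N\ge1$ with $r_{k+N}\le r_k$; this is exactly the content of Lemma \ref{lemma:lem111}, which I would take as the key lemma.

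First I would carry out the argument of Lemma \ref{lemma:lem111} in detail: suppose for contradiction that $r_{k+N}>r_k$ for every $N\ge1$. Substituting these inequalities into the left-hand side of the last identity in \eqref{equ:recursion} and using that all exponents are nonnegative, one bounds the left-hand side strictly below by $(r_k)^{2N+2}$, a fixed positive quantity raised to a growing power — more precisely, bounded below by $\min\{r_k,1\}^{2N+2}$ if $r_k<1$, but the cleaner route is to note directly that the left side exceeds $(r_k)^{2N+2}\ge (\min\{r_k,1\})^{2N+2}>0$, whereas the right side equals $\prod_{j=0}^{N}\bar{h}_{k+j}$, and since each $\bar{h}_{k+j}\in(\tfrac1{\lambda^2},1]$ with $\lambda>1$, the worst case $\bar{h}\equiv 1$ still forces us to be careful — so the honest contradiction needs $\bar{h}_{k+j}$ bounded away from $1$. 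Here the hypothesis $\tau_k\in[0,\infty)$ is what matters: if $\tau_k=0$ identically we are back in the pure BB1 case where $\bar{h}\equiv1$ and the product does not tend to zero, so the statement as phrased implicitly concerns the genuinely regularized regime, and one should either assume $\tau_k\ge\tau_{\min}>0$ or, more in the spirit of the paper, observe that the $r_k$-oscillation-with-growing-amplitude argument of Section \ref{sec:PS} still applies verbatim (since \eqref{equ:2dim} has the same homogeneous part as \eqref{equ:BB1}) and yields $\liminf_j l_j=0$ along local minima, which already delivers the needed $N$ exactly as in Lemma \ref{lemma:lem1}. I would present the product-telescoping contradiction as the primary argument and remark that it requires $\limsup_k \tau_k>0$ (or a uniform lower bound), falling back on the oscillation argument otherwise.

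With Lemma \ref{lemma:lem111} in hand, the theorem itself is two lines: pick $N$ so that $r_{k+N}\le r_k$, whence $(r_{k+N}+1)/(r_k+1)\le1$, so \eqref{prod} gives $\|\g_{k+N}\|_2^2\le\prod_{j=0}^{N-1}\bar{\xi}_{k+j}^2\le(1-\tfrac1\lambda)^{2N}$, i.e. $\|\g_{k+N}\|_2\le(1-\tfrac1\lambda)^N\|\g_k\|_2$. Iterating this along a chain of such indices and combining with the elementary fact that $\|\g_{k+1}\|_2\le\sqrt{\bar{\beta}_k+\bar{\xi}_k^2}\,\|\g_k\|_2$ is bounded on the intermediate steps (each ratio $\|\g_{j+1}\|_2/\|\g_j\|_2$ is uniformly bounded since $\bar{\beta}_k$ from \eqref{hghg} and $\bar{\xi}_k^2$ are bounded functions of $r_{k-1}\ge0$ and $\tau_k\ge0$), one concludes that $\|\g_k\|_2\to0$ with an $R$-linear rate: there exist $C>0$ and $\rho\in(0,1)$ — e.g. $\rho=(1-\tfrac1\lambda)^{1/\bar N}$ if the gaps $N$ can be taken $\le\bar N$ — with $\|\g_k\|_2\le C\rho^k\|\g_1\|_2$. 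The only genuine obstacle is the one flagged above: making the contradiction in Lemma \ref{lemma:lem111} airtight requires either assuming the regularization parameter does not degenerate to zero or else routing through the growing-amplitude oscillation of $r_k$; once that is settled, the passage from the $r_k$-recursion to $R$-linear gradient decay is routine.
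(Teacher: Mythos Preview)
Your plan is essentially the paper's own proof: cite Lemma~\ref{lemma:lem111} to obtain $N\ge1$ with $r_{k+N}\le r_k$, feed this into the product identity \eqref{prod}, bound each factor $\bar\xi_{k+j}\in(0,1)$, and declare $R$-linear convergence. The paper's argument is in fact terser than yours and does not engage with the caveats you raise --- in its proof of Lemma~\ref{lemma:lem111} it simply asserts $\prod_j\bar h_{k+j}\to0$ from \eqref{lambar} without excluding the degenerate case $\tau_k\equiv0$ or treating $r_k<1$ separately, and in the theorem itself it does not discuss bounding the gap $N$ or the intermediate gradient ratios --- so the concerns you flag are genuine looseness in the paper's own presentation rather than obstacles you need to overcome beyond what the paper does.
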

\begin{proof}
	From Lemma \ref{lemma:lem111} and \eqref{prod}, we know that there exists an integer $N\ge1$ such that  
	\begin{align}
	\begin{split}
	\frac{\|\g_{k+N}\|_{2}^{2}}{\|\g_{k}\|_{2}^{2}}&=\Big(\prod_{j=0}^{N-1}(\bar{\xi}_{k+j})^2\Big)\frac{r_{k+N}+1}{r_{k}+1}\\
	&\le\prod_{j=0}^{N-1}(\bar{\xi}_{k+j})^2,
	\end{split}
	\end{align}
	where each $\bar{\xi}_{k+j}\in(0,1)$. Therefore, the sequence $\{\|\g_{k}\|_{2}\}$ converges to zero $R$-linearly. This completes our proof.	
\end{proof}

\section{A unified perspective on gradient descent methods}\label{sec:discussions}

In this paper, the R-linear convergence of the RBB method for any-dimensional strictly convex quadratics is established by decomposing the gradient into two components and analyzing the behavior of the associated second-order difference equation defined by the ratio of their norms. In addition, the convergence results of this paper can be easily extended to the RBB-like methods with the following form
\begin{equation*}
\tilde{\alpha}_{k}^{RBB}=\frac{\mathbf{s}_{k-1}^{\T}\mathbf{y}_{k-1}+\tau_{k} \mathbf{y}_{k-1}^{\T} A^{m}\mathbf{s}_{k-1}}{\mathbf{s}_{k-1}^{\T}\mathbf{s}_{k-1}+\tau_{k} \mathbf{s}_{k-1}^{\T}A^{m} \mathbf{s}_{k-1}},
\end{equation*}
with $m\ge1$, which is the solution to the following regularized least squares problem 
\begin{equation*}
\min_{\alpha\in\mathbb{R}} \Big\{\Vert \alpha \s_{k-1}-\y_{k-1}\Vert_{2}^{2} + \tau_{k}\Vert \alpha A^{\frac{m}{2}}\s_{k-1}- A^{\frac{m}{2}}\y_{k-1}\Vert_{2}^{2} \Big\},
\end{equation*}
where $\tau_{k}\ge0$ is the regularization parameter. When $m=1$, we have  $\alpha_{k}^{RBB}=\tilde{\alpha}^{RBB}$.

It follows from \eqref{BB steps} that $\alpha_{k}^{BB1}$ can be regarded as resulting from a one-step delay of $\alpha_{k}^{SD}$. Moreover, inspired such a delayed strategy, \cite{Friedlander1998GradientMethodRetards} further proposed a class of generalized spectral gradient step-size methods with multi-step delay strategies, which take the following form
\begin{equation}\label{equ:delay}
\alpha_{k}=\frac{\g_{v(k)}^{\T}A^{\rho(k)}\g_{v(k)}}{\g_{v(k)}^{\T}A^{(\rho(k)-1)}\g_{v(k)}},
\end{equation}
where  
\begin{equation}\label{equ:vk}
v(k)\in\{k,k-1,\ldots,\max\{1,k-m\}\},
\end{equation}
\begin{equation*}
\rho(k)\in\{q_{1},\ldots,q_{m}\},\quad\text{for}\quad  k=0,1,2,\ldots,
\end{equation*}
where $m$ is a positive integer and $q_{j}\ge 1, j=1,2,\ldots,m$ is a real number. For a fixed delay step count $v(k)$, as the power $\rho(k)$ of the Hessian matrix $A$  increases (corresponding to the power method in numerical linear algebra), the term $\alpha_{k}$ acts as a scalar derived from the regularization term in the RBB method, aiming to obtain a larger scalar value similar to that in BB2, thereby enhancing the stability of the algorithm. Now, we consider the convergence of gradient descent methods with different delayed step-size strategies under the fixed condition $\rho(k)=1$. A natural inquiry arises: can the convergence analysis framework developed in this paper be applied to address this problem in a unified manner?

As indicated by \cite{Forsythe1968asymptoticdirectionsthes,Yuan2006newstepsizesteepest} and the analysis in Section \ref{sec:n-dimensional}, the behavior of the gradient descent method in high-dimensional problems is essentially identical to that in two-dimensional cases. In high-dimensional settings, the distribution of eigenvalues of the Hessian matrix may influence local behavioral characteristic of the gradient descent method, but it does not affect the overall convergence of the algorithm. Therefore, this section focuses solely on the two-dimensional case with Hessian $A$ from \eqref{equ:A} in Section \ref{sec:SpecialCase}. Under the condition that $\rho_{k}=1$, we denote $\alpha_{k}$ in \eqref{equ:delay} as $\alpha_{k}^{D}$ as follows
\begin{equation}\label{equ:delays}
\alpha_{k}^{D}=\frac{\g_{v(k)}^{\T}A\g_{v(k)}}{\g_{v(k)}^{\T}\g_{v(k)}},
\end{equation}
where $v(k)$ is from \eqref{equ:vk}.

\subsection{Convergence of multi-step delayed spectral gradient methods}
Following the analytical process outlined in Section \ref{sec:SpecialCase}, we know that the variable $r_{k}$ in the gradient descent method with step size $\frac{1}{\alpha_{k}^{D}}$ satisfies the following form 
\begin{equation}\label{equ:rD}
\frac{(r_{v(k)})^2r_{k+1}}{r_{k}}=1.
\end{equation}
In particular, when $v(k)=k-1$, Equation \eqref{equ:rD} reduces to \eqref{equ:BB1}, which corresponds to the iterative scheme of BB1. We now consider the general case when $v_{k}\le k-1$, with the constraint that $v_{k}$ is nonnegative. Let
\begin{equation*}
	v(k)=k-j,
\end{equation*}
where $j\in\{1,2,\ldots,k-1\}$. Following the procedure outlined in Section \ref{sec:PS}, we apply a logarithmic transformation to Equation \eqref{equ:rD} and obtain
\begin{equation}\label{equ:rvk}
	2\ln(r_{v(k)}) +\ln(r_{k+1})- \ln(r_{k})=0,
\end{equation} 
which is a high-order linear difference equation. Let 
\begin{equation}\label{equ:qk}
	q_{k}=\ln(r_{k}).
\end{equation}
Then the characteristic equation of \eqref{equ:rvk} is as follows
\begin{equation}\label{equ:character}
	q^{d}-q^{d-1}+2=0,
\end{equation}
where $d\ge 2$ is an integer.

\begin{theorem}\label{them:complex}
	The solution to Equation \eqref{equ:character} necessarily involves complex numbers, and all complex roots have moduli greater than $1$.
\end{theorem}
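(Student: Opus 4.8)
The plan is to read the statement as two assertions about the real polynomial $p(q):=q^{d}-q^{d-1}+2$ in \eqref{equ:character}: (i) every root $q\in\mathbb{C}$ with $|q|\le 1$ must be the real number $-1$, so in particular every non-real root has modulus strictly greater than $1$; and (ii) $p$ always possesses at least one non-real root. Claim (i) is a short modulus estimate, while claim (ii) requires a parity-sensitive analysis of the real zeros of $p$.

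For claim (i) I would first rewrite \eqref{equ:character} in the factored form $q^{d-1}(q-1)=-2$. If $q$ is a root with $|q|\le 1$, taking moduli gives $2=|q|^{d-1}\,|q-1|\le |q-1|\le |q|+1\le 2$, so every inequality is an equality. From $|q|^{d-1}\,|q-1|=|q-1|$ and $|q-1|=2\neq 0$ we get $|q|=1$, and then $|q-1|=2$ with $|q|=1$ forces $q=-1$, the unique point of the unit circle at distance $2$ from $1$. Hence the only root of $p$ with modulus at most $1$ is $q=-1$, and every non-real root satisfies $|q|>1$.

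For claim (ii) I would show $p$ cannot have $d$ real roots. Here $p'(q)=q^{d-2}\bigl(dq-(d-1)\bigr)$ has real zeros only at $q=0$ and $q=c:=\tfrac{d-1}{d}\in(0,1)$, and $p(c)=2-\tfrac{c^{\,d-1}}{d}>2-\tfrac1d>0$. If $d$ is even, then $q^{d-2}\ge 0$, so $p$ decreases on $(-\infty,c]$ and increases on $[c,\infty)$ with minimum value $p(c)>0$; thus $p$ has no real root, and since $\deg p=d\ge 2$ all $d$ roots are non-real. If $d$ is odd (so $d\ge 3$ and $q^{d-2}$ carries the sign of $q$), then $p$ increases on $(-\infty,0]$ from $-\infty$ to $p(0)=2$, decreases on $[0,c]$, and increases on $[c,\infty)$, with $p(c)>0$; hence $p$ has exactly one real root, and since $p(-1)=(-1)^{d}-(-1)^{d-1}+2=-1-1+2=0$ that root is $q=-1$, which is simple because $p'(-1)=2d-1\neq 0$. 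As $d\ge 3$, the remaining $d-1\ge 2$ roots are non-real. In either parity case $p$ has a non-real root, and by claim (i) it has modulus exceeding $1$.

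The individual computations are elementary, so there is no deep obstacle; the step needing the most care is the parity split in claim (ii), specifically confirming that for odd $d$ the value $q=-1$ is the \emph{only} real root. This is also the point that fixes the correct reading of the theorem: the modulus-one root $q=-1$ occurring for odd $d$ is real, so the strict bound $|q|>1$ is claimed only for the genuinely complex roots.
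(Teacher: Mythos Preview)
Your argument is correct. Claim~(i) is exactly the paper's proof: rewrite \eqref{equ:character} as $q^{d-1}(q-1)=-2$, assume $|q|\le 1$, and use $2=|q|^{d-1}|q-1|\le |q-1|\le |q|+1\le 2$ to force $|q|=1$ and $|q-1|=2$, hence $q=-1$; then observe that $-1$ is a root precisely when $d$ is odd.

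Where you diverge from the paper is in claim~(ii). The paper, after the modulus argument, simply asserts that for even $d$ ``all roots are complex numbers with moduli greater than~$1$'' and for odd $d$ ``all other solutions are complex numbers with moduli greater than~$1$,'' without separately verifying that no real root with $|q|>1$ exists. Your derivative analysis of $p'(q)=q^{d-2}\bigl(dq-(d-1)\bigr)$, showing that $p$ has no real zero when $d$ is even and exactly the simple zero $q=-1$ when $d$ is odd, supplies this missing step and makes the conclusion that the equation genuinely has non-real roots fully rigorous. This is precisely what the subsequent discussion in the paper needs, since it writes $q=re^{i\theta}$ with $\theta\in(0,2\pi)$ and relies on $\theta\neq 0$.
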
 
\begin{proof}
	Equation \eqref{equ:character} is equivalent to 
	\begin{equation}\label{equ:qd}
		q^{d-1}(q-1)=-2.
	\end{equation}
Let $q$ be an arbitrary root of \eqref{equ:qd}. Then we have
\begin{equation}\label{equ:moduli}
	|q|^{d-1}|q-1|=2,
\end{equation} 
where $|\cdot|$ is the moduli of a complex number. We now assume that there exists a solution $q^{'}$ such that 
\begin{equation}\label{equ:le1}
	|q^{'}|\le 1.
\end{equation}
Then we have 
\begin{equation}\label{equ:qde}
	|q^{'}|^{d-1}\le 1.
\end{equation}
Substituting \eqref{equ:qde} into \eqref{equ:moduli} yields 
\begin{equation}\label{equ:condition1}
	|q^{'}-1|\ge 2.
\end{equation}	
Meanwhile, by the triangle inequality, we have
\begin{equation}\label{equ:condition2}
	|q^{'}-1|\le|q^{'}|+1\le 2.
\end{equation}
From \eqref{equ:moduli}, \eqref{equ:condition1} and \eqref{equ:condition2}, it must be that
\begin{equation*}
	|q^{'}-1|=2 \quad\text{and}\quad|q^{'}|^{d-1}=1,
\end{equation*}
hence $|q^{'}|=1$. On the unit circle, the only point satisfying $|q^{'}-1|=2$ is $q^{'}=-1$. Direct verification shows that $q^{'}=-1$ is a solution to Equation \eqref{equ:qd} when $d$ is odd, but it is not a solution when $d$ is even.

Therefore, when $d$ is even, all roots are complex number with moduli greater than $1$; when $d$ is odd, except for the real solution $q^{'}=-1$, all other solutions are complex numbers with moduli greater than $1$. We complete the proof.
	
\end{proof}

According to Theorem \ref{them:complex}, the solution of the characteristic equation \eqref{equ:character} can be expressed in the form
\begin{equation}\label{equ:cha}
	q=re^{i\theta},
\end{equation}
where $r>1$, $\theta\in(0,2\pi)$. Substituting \eqref{equ:cha} into \eqref{equ:character} yields
\begin{equation*}
	r^{d-1}e^{i(d-1)\theta}(re^{i\theta}-1)=-2.
\end{equation*}
This leads to the modulus equation
\begin{equation}\label{equ:rkk}
	r^{d-1}|re^{i\theta-1}|=2,
\end{equation}
where $|re^{i\theta-1}|=\sqrt{r^2+1-2r\cos(\theta)}$,
and
the phase equation
\begin{equation}\label{equ:theta}
	(d-1)\theta+\arg(re^{i\theta}-1)=\pi+2l\pi,\quad\text{for}\quad l\in\mathbb{Z},
\end{equation}
where $\arg(re^{i\theta}-1)=\arctan(\frac{r\sin(\theta)}{r\cos(\theta)-1})$. Moreover, it follows from \eqref{equ:cha} that $\theta\neq 0$; otherwise, this would contradict that there exists solution $q=-1$ when $d$ is odd.

Therefore, the solution of the difference equation \eqref{equ:rvk} have the following form  
\begin{equation}
	q_{k}=\widehat{c}r^{k}\cos(\widetilde{\phi}+k\arctan(\theta)),
\end{equation} 
where $\widehat{c}$ and $\widetilde{\phi}$ are determined initial iterates. Thus, we finally obtain the expression for $r_{k}$ as follows
\begin{equation}\label{equ:rkj}
	r_{k}=e^{\widehat{c}r^{k}\cos(\widetilde{\phi}+k\arctan(\theta))}.
\end{equation}
Note that Equations \eqref{equ:rkk} and \eqref{equ:theta} for $r$ and $\theta$, respectively, are complicated, and it is generally difficult to provide their analytical expressions. Nevertheless, the structure of \eqref{equ:rkj} is similar to that of \eqref{equ:BB1solu}. Since $\theta\neq 0$ and $r>1$, $r_{k}$ also exhibits oscillatory behavior, and the amplitude of the oscillation progressively increases. According to the conclusion of Lemma \ref{lemma:lem1}, it follows that the gradient descent method with a delayed multi-step strategy is also $R$-linearly convergent.

\subsection{Behavioral characteristics of the steepest descent method}
In this subsection, we examine the case where $v(k)=k$, corresponding to the behavioral characteristics of the steepest descent method, which does not employ a step-size delay strategy. By continue following the analysis procedure presented in Section \ref{sec:n-dimensional}, we have
\begin{equation}\label{equ:Drk}
	r_{k+1}=\frac{1}{r_{k}}.
\end{equation}

Compared with Equation \eqref{equ:rD}, Equation \eqref{equ:Drk} corresponds to a first-order difference equation. Thus, it can be observed that the step-size delay strategy increases the order of the difference equation associated with the gradient descent method, thereby accelerating convergence. According to expression \eqref{equ:Drk}, a notable characteristic of the SD method is that the generated sequence $r_{k}$ repeats every other step, i.e., it satisfies $r_{k+2}=r_{k}$. From relation \eqref{prod}, we have 
\begin{equation}
\frac{\|\g_{k+2}\|_{2}^{2}}{\|\g_{k}\|_{2}^{2}}=\xi_{k}^2\xi_{k+1}^2.
\end{equation}
Hence, it can be seen that the convergence of the SD method is strictly governed by $\prod\xi_{k}^{2}$, while the $\frac{r_{k+N}}{r_{k}}$ term in \eqref{prod} does not contribute to accelerating convergence. Conversely, for the gradient descent method with a delayed step-size strategy, taking BB1 as an example, the generated $\{r_{k}\}$ exhibits oscillatory behavior with increasing amplitude, which allows the $\frac{r_{k+N}}{r_{k}}$ in \eqref{prod} to exert a significant decreasing effect on the $\prod\xi_{k}^{2}$ term.

\section{Selecting regularization parameter}\label{sec:parameter}
In the RBB method, the selection of the regularization parameter $\tau_{k}$ remains a critical matter, as it governs the practical performance of the method. We now proceed to analyze how to choose an appropriate regularization parameter.

Recall from the convergence analysis in Section \ref{sec:n-dimensional} that the descent rates of $(\g_{k}^{(n)})^{2}$ and $\|\g_{k}^{1}\|_{2}$ are used to measure the convergence and stability (the opposite of oscillation) of the RBB method, respectively. These two factors collectively determine the overall performance of the algorithm. Based on the conclusions of Proposition \ref{prop:increasing}, Lemmas \ref{lemma:lem1} and \ref{lemma:lem111}, there are two ways to improve the performance of the BB method as follows
\begin{enumerate}
	\item Minimization of the parameter $\xi_{k}$;
	\item Minimization of the number of non-monotonic steps $N$.
\end{enumerate}

As shown in Proposition \ref{prop:increasing}, increasing the value of regularization parameter raises the value of $\xi_{k}$, thereby reducing the convergence rate of the algorithm. The non-monotone step count $N$ is closely related to the non-homogeneous term $h_{k-1}$. Generally, a larger $h_{k-1}$ causes greater interference with the corresponding homogeneous general solution, leading to a larger $N$. The following analysis shows that, under certain conditions, increasing the value of regularization parameter helps to a reduce the magnitude of $h_{k-1}$, which in turn contributes to lowering the value of $N$.

From \eqref{equ:hkkk}, for clarity, let 
\begin{equation}\label{equ:hkk}
h_{k-1}=\frac{N(\tau_{k})}{D(\tau_{k})},
\end{equation}
where 
\begin{align*}
N(\tau_{k})=\big(P+\tau_{k}C\big)^2+\big(Q+\tau_{k}E\big)^2F,\quad D(\tau_{k})=\big(G+\tau_{k}H\big)^2,
\end{align*} 
where
\begin{align}\label{parameter}
\begin{split}
F=\eta_{k}(\bar{\eta}_{k}-\eta_{k})\lambda^2\ge 0,\quad G=\eta_{k-1}\lambda-1>0,\quad H=\eta_{k-1}\lambda(\bar{\eta}_{k-1}\lambda-1)>0.
\end{split}
\end{align}
Let 
\begin{equation}\label{equ:tt}
	T=PC+FQE.
\end{equation}
Then we have the following conclusion.
\begin{proposition}\label{prop:CEF}
	If $\bar{\eta}_{k-1}\le\eta_{k}$. Then $C^2+E^2F<\eta_{k-1}\lambda T$.
\end{proposition}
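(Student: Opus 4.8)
The plan is to reduce the claimed inequality to a handful of sign checks. Writing the target as $C^2+E^2F<\eta_{k-1}\lambda T$ with $T=PC+FQE$ and moving everything to one side, the inequality is equivalent to
\[
C\big(C-\eta_{k-1}\lambda P\big)+FE\big(E-\eta_{k-1}\lambda Q\big)<0 .
\]
So it suffices to pin down the signs of the five quantities $C$, $E$, $F$, $C-\eta_{k-1}\lambda P$, and $E-\eta_{k-1}\lambda Q$, and then observe that the two summands above are respectively strictly negative and nonpositive.

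The key algebraic step is to simplify $C-\eta_{k-1}\lambda P$. Substituting the definitions of $P$ and $C$ from \eqref{equ:betak} and separating the $r_{k-1}$-free part from the $r_{k-1}$-part, the constant part telescopes into a product and the linear part collapses to a single monomial, giving
\[
C-\eta_{k-1}\lambda P=(1-\eta_{k}\lambda)(1-\eta_{k-1}\lambda)+r_{k-1}\eta_{k-1}\lambda^{2}(\bar{\eta}_{k-1}-\eta_{k-1}).
\]
Because $\eta_{k-1},\eta_{k}\in[\frac{\gamma}{\lambda},1]$ with $\gamma>1$, both $1-\eta_{k}\lambda$ and $1-\eta_{k-1}\lambda$ are strictly negative, so their product is strictly positive; and $\bar{\eta}_{k-1}\ge\eta_{k-1}$ by \eqref{bareta}, so the second summand is nonnegative. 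Hence $C-\eta_{k-1}\lambda P>0$. An analogous, shorter computation gives the clean identity $E-\eta_{k-1}\lambda Q=1-\eta_{k-1}\lambda<0$.

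It then remains to record the signs of $C$, $E$, and $F$. By definition $E=r_{k-1}\eta_{k-1}\lambda+1>0$, and $F=\eta_{k}(\bar{\eta}_{k}-\eta_{k})\lambda^{2}\ge0$ by \eqref{bareta}. The hypothesis $\bar{\eta}_{k-1}\le\eta_{k}$ makes the term $r_{k-1}\eta_{k-1}\lambda^{2}(\bar{\eta}_{k-1}-\eta_{k})$ in $C$ nonpositive, so $C\le1-\eta_{k}\lambda<0$. Assembling everything: $C(C-\eta_{k-1}\lambda P)$ is (negative)$\times$(positive) and hence strictly negative, while $FE(E-\eta_{k-1}\lambda Q)$ is (nonnegative)$\times$(positive)$\times$(negative) and hence nonpositive; their sum is strictly negative, which is precisely the displayed inequality, and the proposition follows.

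The only step that needs genuine care — really bookkeeping rather than a conceptual obstacle — is verifying the identity for $C-\eta_{k-1}\lambda P$: one must notice that the $r_{k-1}$-free terms regroup exactly into $(1-\eta_{k}\lambda)(1-\eta_{k-1}\lambda)$ and that the cross terms in $r_{k-1}$ cancel down to $r_{k-1}\eta_{k-1}\lambda^{2}(\bar{\eta}_{k-1}-\eta_{k-1})$. Once that is in hand, the argument is entirely sign-driven, relying only on the standing bounds $\eta_{k-1},\eta_{k},\bar{\eta}_{k-1},\bar{\eta}_{k}\in[\frac{\gamma}{\lambda},1]$, $\gamma>1$, $\lambda>2$, and $\bar{\eta}\ge\eta$ from \eqref{bareta}.
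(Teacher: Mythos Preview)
Your proof is correct and is essentially the same argument as the paper's. The paper writes $\eta_{k-1}\lambda T-(C^{2}+E^{2}F)=I_{1}+I_{2}$ with $I_{1}=C(\eta_{k-1}\lambda P-C)$ and $I_{2}=FE(\eta_{k-1}\lambda Q-E)$, which is exactly the negative of your decomposition $C(C-\eta_{k-1}\lambda P)+FE(E-\eta_{k-1}\lambda Q)$; your clean identity $C-\eta_{k-1}\lambda P=(1-\eta_{k}\lambda)(1-\eta_{k-1}\lambda)+r_{k-1}\eta_{k-1}\lambda^{2}(\bar{\eta}_{k-1}-\eta_{k-1})$ is a slightly more explicit packaging of the paper's computation of $(\eta_{k-1}\lambda-1)a+\eta_{k-1}\lambda b-c$, but the sign analysis and the use of $\bar{\eta}_{k-1}\ge\eta_{k-1}$ and $\bar{\eta}_{k-1}\le\eta_{k}$ are identical.
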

\begin{proof}
	It follows from $\bar{\eta}_{k-1}\le\eta_{k}$ that 
	\begin{equation}\label{equ:pc}
		P<0\quad \text{and}\quad C<0
	\end{equation} 
	hold.
	From \eqref{parameter} and \eqref{equ:pc}, we have $T>0$.
	Let
	\begin{equation*}
	a=1-\eta_{k}\lambda<0,\quad b=r_{k-1}\lambda(\eta_{k-1}-\eta_{k})\le 0,\quad c=r_{k-1}\eta_{k-1}\lambda^2(\bar{\eta}_{k-1}-\eta_{k})<0.
	\end{equation*}
	Then 
	\begin{align}
	\begin{split}
	\eta_{k-1}\lambda T-(C^2+E^2F)&=\eta_{k-1}\lambda(PC+FQE)-(C^2+E^2F)\\
	&=\big[\eta_{k-1}\lambda(a+b)(a+c)-(a+c)^2\big]+\big[\eta_{k-1}\lambda FQE-E^2F\big]\\
	&=I_{1}+I_{2},
	\end{split}
	\end{align}
	where $I_{1}=(a+c)\big[\eta_{k-1}\lambda(a+b)-(a+c)\big],\quad I_{2}=FE(\eta_{k-1}\lambda Q-E)$.
	\begin{enumerate}
		\item[(I)] Prove $I_{1}>0$. To this end, we compute
		\begin{align}
		\begin{split}
		\eta_{k-1}\lambda b-c&=\eta_{k-1}r_{k-1}\lambda^2(\eta_{k-1}-\eta_{k})-r_{k-1}\eta_{k-1}\lambda^2(\bar{\eta}_{k-1}-\eta_{k})\\
		&=r_{k-1}\eta_{k-1}\lambda^2(\eta_{k-1}-\bar{\eta}_{k-1})\\
		&\le 0.
		\end{split}
		\end{align}
		Since $a+c<0$, we have $I_{1}=(a+c)\big[(\eta_{k-1}\lambda-1)a+\eta_{k-1}\lambda b-c\big]\ge 0$.
		
		\item[(II)] Prove $I_{2}>0$. Since $FE>0$, we consider only the sign of $\eta_{k-1}\lambda Q-E.$
		Then we have 
		\begin{align}
		\begin{split}
		\eta_{k-1}\lambda Q-E&=\eta_{k-1}\lambda(1+r_{k-1})-(1+r_{k-1}\eta_{k-1}\lambda)\\
		&=\eta_{k-1}\lambda-1\\
		&>0.
		\end{split}
		\end{align}
	\end{enumerate}	
	Combining $(I)$ and $(II)$, we have $I_{1}+I_{2}>0$. We complete the proof.	
	
\end{proof}

\begin{theorem}\label{the:mh}
	If $\bar{\eta}_{k-1}\le\eta_{k}$. Then the value of $h_{k-1}$ in \eqref{equ:nrecurion} is monotonically decreasing with respect to $\tau_{k}$.
\end{theorem}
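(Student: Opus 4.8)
The plan is to show that the derivative $\frac{d h_{k-1}}{d\tau_k}$ is negative for all $\tau_k \ge 0$ by reducing it to a sign condition on a quadratic in $\tau_k$, and then showing that quadratic is negative using Proposition \ref{prop:CEF}. Writing $h_{k-1} = N(\tau_k)/D(\tau_k)$ as in \eqref{equ:hkk}, the quotient rule gives
\[
\frac{d h_{k-1}}{d\tau_k} = \frac{N'(\tau_k)D(\tau_k) - N(\tau_k)D'(\tau_k)}{D(\tau_k)^2},
\]
so it suffices to show the numerator is negative. Since $N(\tau_k) = (P+\tau_k C)^2 + (Q+\tau_k E)^2 F$ and $D(\tau_k) = (G+\tau_k H)^2$, one computes $N'(\tau_k) = 2C(P+\tau_k C) + 2EF(Q+\tau_k E)$ and $D'(\tau_k) = 2H(G+\tau_k H)$. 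Substituting and dividing by the positive factor $2(G+\tau_k H)$, the sign of the derivative is governed by
\[
\Phi(\tau_k) := \big[C(P+\tau_k C) + EF(Q+\tau_k E)\big](G+\tau_k H) - H\big[(P+\tau_k C)^2 + (Q+\tau_k E)^2 F\big].
\]

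First I would expand $\Phi(\tau_k)$ as a polynomial in $\tau_k$. The leading terms: the $\tau_k^3$ coefficient is $(C^2 + E^2 F)H - H(C^2 + E^2 F) = 0$, so $\Phi$ is in fact at most quadratic in $\tau_k$ — this cancellation is the structural reason the argument works. Collecting the remaining coefficients, the $\tau_k^2$ coefficient is $(C^2+E^2F)G + (CP+EFQ)H - 2H(CP + EFQ) = (C^2+E^2F)G - (CP+EFQ)H = (C^2+E^2F)G - TH$, using the notation $T = PC + FQE$ from \eqref{equ:tt}. The $\tau_k^1$ and $\tau_k^0$ coefficients are obtained similarly; the constant term is $(CP+EFQ)G - H(P^2+Q^2F) = TG - H(P^2+Q^2F)$.

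The key step is then to show each coefficient of $\Phi(\tau_k)$ is nonpositive (and at least one strictly negative), so that $\Phi(\tau_k) < 0$ for all $\tau_k \ge 0$ and the derivative is negative throughout. The hypothesis $\bar\eta_{k-1} \le \eta_k$ enters exactly here: by Proposition \ref{prop:CEF} we have $C^2 + E^2 F < \eta_{k-1}\lambda\, T = (G+1)T$, hence $(C^2+E^2F)G < (G+1)TG$; combined with $H = \eta_{k-1}\lambda(\bar\eta_{k-1}\lambda - 1)$ and the bounds $G = \eta_{k-1}\lambda - 1 > 0$, $T > 0$, I would verify $(C^2+E^2F)G - TH < 0$, and analogously bound the lower-order coefficients. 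The main obstacle I anticipate is the bookkeeping in the middle ($\tau_k^1$) coefficient, where one must combine cross terms such as $GEFQ$, $HCP$ and so on; the cleanest route is probably to substitute the abbreviations $a = 1-\eta_k\lambda$, $b = r_{k-1}\lambda(\eta_{k-1}-\eta_k)$, $c = r_{k-1}\eta_{k-1}\lambda^2(\bar\eta_{k-1}-\eta_k)$ from the proof of Proposition \ref{prop:CEF}, under which $P = a+b$, $C = a+c$, and exploit the already-established facts $a < 0$, $b \le 0$, $c < 0$, $\eta_{k-1}\lambda b - c \le 0$, and $\eta_{k-1}\lambda Q - E = G > 0$ to force the sign. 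Once all coefficients are controlled, monotonic decrease follows immediately, completing the proof.
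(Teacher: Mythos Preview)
Your approach is the paper's, but your degree count is off by one and this manufactures a phantom difficulty. The expression $\Phi(\tau_k)$ you define is a product of two linear factors minus a quadratic, so it has degree at most $2$, not $3$; the cancellation $(C^{2}+E^{2}F)H - H(C^{2}+E^{2}F)=0$ you found is the $\tau_k^{2}$ coefficient, and what you labeled the ``$\tau_k^{2}$ coefficient'', namely $G(C^{2}+E^{2}F)-HT$, is actually the $\tau_k^{1}$ coefficient. Hence $\Phi$ is \emph{linear},
\[
\Phi(\tau_k)=\big[GT-H(P^{2}+FQ^{2})\big]+\big[G(C^{2}+E^{2}F)-HT\big]\tau_k,
\]
and the ``main obstacle'' you anticipated in a middle coefficient does not exist.

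What remains is to show both the slope and the intercept are negative. Your sketch for the slope is fine: Proposition~\ref{prop:CEF} gives $C^{2}+E^{2}F<\eta_{k-1}\lambda\,T$, and since $G\eta_{k-1}\lambda=\eta_{k-1}\lambda(\eta_{k-1}\lambda-1)\le \eta_{k-1}\lambda(\bar\eta_{k-1}\lambda-1)=H$ by \eqref{bareta}, one gets $G(C^{2}+E^{2}F)<HT$. For the intercept, however, you are vague, and the $a,b,c$ bookkeeping you propose is not the efficient route. The paper dispatches it with a one-line Cauchy--Schwarz: writing $T=PC+FQE$ as the inner product of $(P,\sqrt{F}Q)$ and $(C,\sqrt{F}E)$ gives $T^{2}\le (P^{2}+FQ^{2})(C^{2}+E^{2}F)<(P^{2}+FQ^{2})\eta_{k-1}\lambda\,T$, whence (using $T>0$) $T<\eta_{k-1}\lambda(P^{2}+FQ^{2})$ and thus $GT<G\eta_{k-1}\lambda(P^{2}+FQ^{2})\le H(P^{2}+FQ^{2})$. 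Without this step your proposal does not actually close.
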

\begin{proof}
From \eqref{equ:hkk}, we have 
\begin{equation*}
	\frac{d(h_{k-1})}{d(\tau_{k})}=\frac{N^{'}(\tau_{k})D(\tau_{k})-N(\tau_{k})D^{'}(\tau_{k})}{(D(\tau_{k}))^2}.
\end{equation*}
Since $(D(\tau_{k}))^2>0$, it suffices to consider the sign of the numerator. Calculation reveals that 
\begin{equation*}
	N^{'}(\tau_{k})D(\tau_{k})-N(\tau_{k})D^{'}(\tau_{k})=2(G+\tau_{k}H)S(\tau_{k}),
\end{equation*}	
where
\begin{equation*}
	S(\tau_{k})=\big[(P+\tau_{k}C)C+(Q+\tau_{k}E)EF\big](G+\tau_{k}H)-\big[(P+\tau_{k}C)^2+(Q+\tau_{k}E)^2F\big]H.
\end{equation*}	
Since $G>0$ and $H>0$, we consider only the sign of $S(\tau_{k})$, which can be rewritten in the following form
\begin{equation*}
	S(\tau_{k})=\big[GT-H(P^2+FQ^2)\big]+\big[G(C^2+E^2F)-HT\big]\tau_{k}.
\end{equation*}
	The following prove that $S(\tau_{k})<0$ for all $\tau_{k}\ge 0$.
\begin{enumerate}
	\item Slope analysis. It follows \eqref{parameter} and Proposition \ref{prop:CEF} that
	\begin{equation*}
		\frac{H}{G}>\eta_{k-1}\lambda\quad\text{and}\quad C^2+E^2F<\eta_{k-1}\lambda T.
	\end{equation*}
	Thus, we have 
	\begin{equation*}
		G(C^2+E^2F)-HT<(G\eta_{k-1}\lambda-H)T <0.
	\end{equation*}
	\item Intercept analysis. From Cauchy-Schwartz inequality, we know 
	\begin{equation}\label{equ:TT}
		T^2\le(P^2+FQ^2)(C^2+E^2F)<(P^2+FQ^2)\eta_{k-1}\lambda T.
	\end{equation} 
	From $T>0$ and \eqref{equ:TT}, we have 
	\begin{equation*}\label{equ:T}
		T<\eta_{k-1}\lambda(P^2+FQ^2).
	\end{equation*}
	Thus, we have
	\begin{align}
	\begin{split}
		GT&<G\eta_{k-1}\lambda(P^2+FQ^2)\\
		&<H(P^2+FQ^2).
	\end{split}	
	\end{align}
	So $S(0)<0$.
\end{enumerate}
Therefore, we have $h^{'}(\tau_{k})<0$ for all $\tau_{k}\ge 0$. We complete the proof.
	
\end{proof}

According to Theorem \ref{the:mh}, when $\bar{\eta}_{k-1}\le\eta_{k}$, choosing a larger regularization parameter $\tau_{k}$ conducive to reducing the value of $h_{k-1}$, that is, it decreases the input from the non-homogeneous term, thereby contributing to enhanced stability of the algorithm. Further, based on the definition of $\alpha_{k}^{RBB}$ in \eqref{equ:nnRBB}, it is high probability that $\bar{\eta}_{k-1}\le\eta_{k}$ implies  $\alpha_{k}^{RBB}\le\alpha_{k+1}^{RBB}$. Based on these analysis, we consider a regularization parameter scheme as follows
\begin{equation}\label{equ:parameter}
\tau_{k}=\mu_{k}\frac{\alpha_{k-1}^{RBB}}{\alpha_{k-2}^{RBB}},
\end{equation}
where $\mu_{k}>0$ is a regulatory factor. Ratio  $\frac{\alpha_{k-1}^{RBB}}{\alpha_{k-2}^{RBB}}$ describes the variability of the two step sizes in two consecutive iterations. This scheme suggests that the value of regularization parameter is proportional to the variability of step sizes in adjacent iterations. That is, under the conditions of high fluctuation, this scheme will generate a larger regularization parameter $\tau_{k}$, which means a larger penalty is imposed on the BB1 model, thereby resulting in a smaller step size and enhancing the algorithm's stability. Furthermore, we consider two choices of $\mu_{k}$ as follows
\begin{equation}\label{equ:muk}
	\mu_{k}=1\quad\text{and}\quad\mu_{k}=\alpha_{k-1}^{RBB},
\end{equation}
respectively. The former implies that $\tau_{k}$ only reflects the variability of the step size, while the latter further adjusts this variability using the step size scalar from the previous iteration. Since $\alpha_{k-1}^{RBB}$ contains local curvature information within the current neighborhood, the latter approach, in one sense, exhibits characteristics more akin to a damped Newton's method. 

Formally, the RBB method corresponding to the regularization parameter with regulatory factor $\mu_{k}=1$ is referred to as RBB1 method, while the one corresponding to the regularization parameter with regulatory factor $\mu_{k}=\alpha_{k-1}^{RBB}$ is referred to as RBB2 method. 

\section{Numerical experiment}\label{sec:NumerEx}
This experiment aims to examine the theoretical results presented in Section \ref{sec:n-dimensional} and to verify the effectiveness of the adaptive regularization parameter scheme \eqref{equ:parameter}. Therefore, in the comparative experiments\footnote{The experiments are conducted in the MATLAB environment.} only the RBB method incorporating regularization parameter \eqref{equ:parameter} is compared with the BB1 and BB2 methods.  We consider a quadratic objective function of the following form
\begin{equation}\label{pro:quaa}
f(\mathbf{x})=\frac{1}{2}(\mathbf{x}-\mathbf{x}_{*})^{\T}A(\mathbf{x}-\mathbf{x}_{*}),
\end{equation}
where $A\in\mathbb{R}^{n\times n}$ is symmetric positive definite (SPD), $\x_{*}\in\mathbb{R}^{n}$ is the unique optimal solution, where the type of optimization problem is determined by the choice of the Hessian matrix $A$.   

\subsection{A typical non-stochastic quadratic optimization problem}
In this class of problems \cite{DeAsmundis2014efficientgradientmethod}, the Hessian matrix $A$ is a diagonal matrix with the diagonal elements as follows
\begin{equation}\label{equ:non-sto}
a_{i}=10^{\frac{ncond}{n-1}(n-i)},\quad\text{for}\quad i=1,\ldots,n,
\end{equation}
where $ncond=\text{log}_{10}(\kappa(A))$. Since, for a given dimension $n$, the eigenvalues of the Hessian matrix $A$ are fixed and exhibit no clustering phenomenon, such problems are generally challenging to solve. However, the advantage is that once the problem dimension $n$, the initial iterate $\x_{1}$, and the optimal solution $\x_{*}$ are determined, the algorithmic results become fully reproducible, which facilitates the evaluation of the algorithm's performance.

We set $\x_{*}=\mathbf{1}$,   $\x_{1}=\mathbf{0}$, $n=5$, $\kappa(A)=10^{3}$ and use the stopping condition $
\Vert \g_{k}\Vert_{2}\le 10^{-20}\Vert \g_{1}\Vert_{2}$. Initial \text{step size} is $\frac{\g_{1}^{\T}\g_{1}}{\g_{1}^{\T}A\g_{1}}$. Figures \ref{figure:BB1} and \ref{figure:RBB} record the sequences $\{r_{k}\}$ and $\{\|\g_{k}\|_{2}\}$ generated during the search progress by the BB1 and RBB methods, respectively. Taking Figure \ref{figure:BB1} as an example for illustration, the label in the box indicated by the arrow in the figure represents the coordinates for the point. It should be noted that in the label for the gradient norm, we use $g_{k}$ to represent $\|\g_{k}\|_{2}$ for brevity.

\begin{figure}[h!]
	\centering
	\subfigure{
		\includegraphics[width=0.45\textwidth]{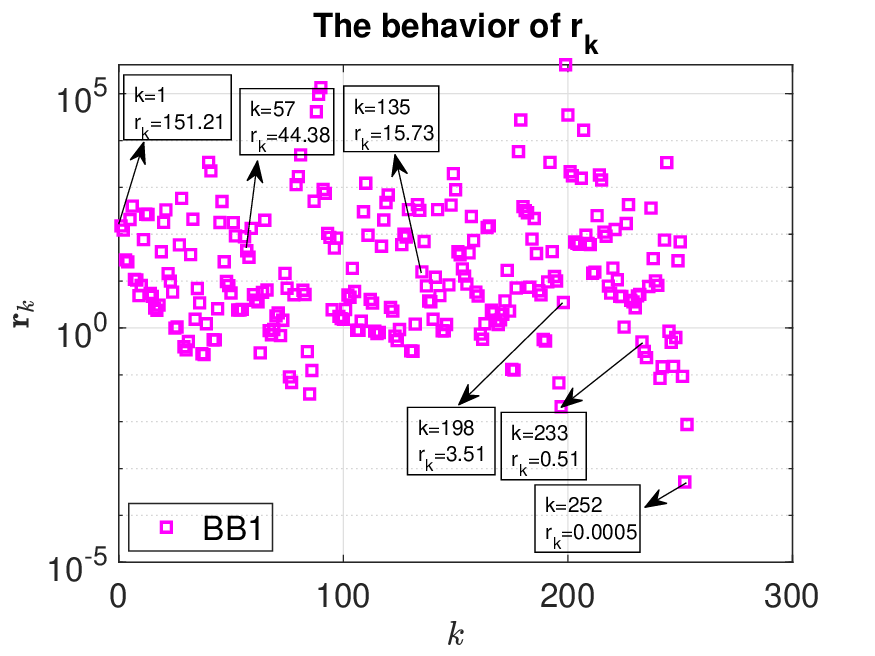}}\hspace{-4pt}
	\subfigure{
		\includegraphics[width=0.45\textwidth]{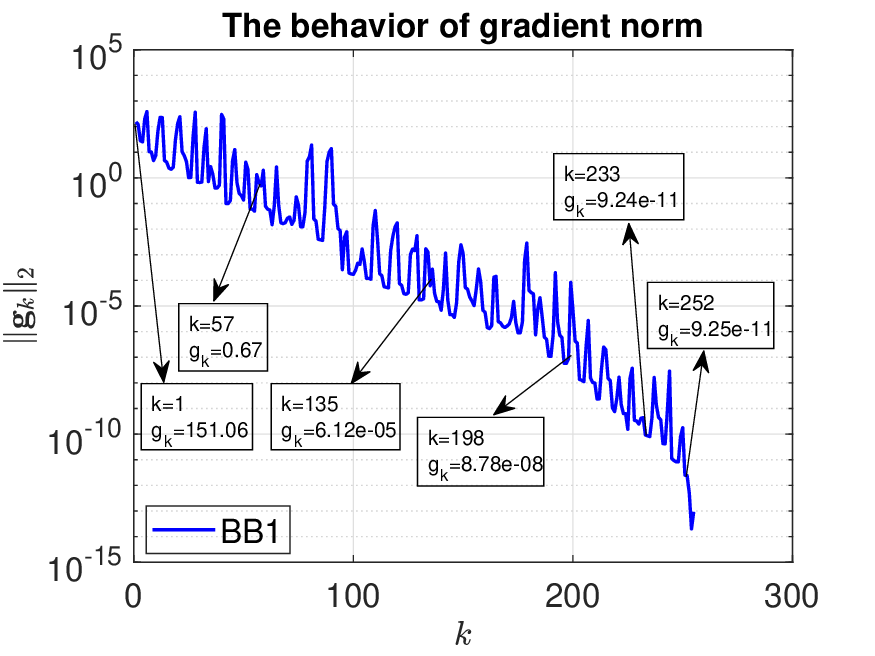}}\hspace{-4pt}
	\caption{\textit{Performance of BB1 method on the problem \eqref{pro:quaa}} with diagonal matrix \eqref{equ:non-sto}.}	
	\label{figure:BB1}	
\end{figure}

\begin{figure}[h!]
	\centering
	\subfigure{
		\includegraphics[width=0.45\textwidth]{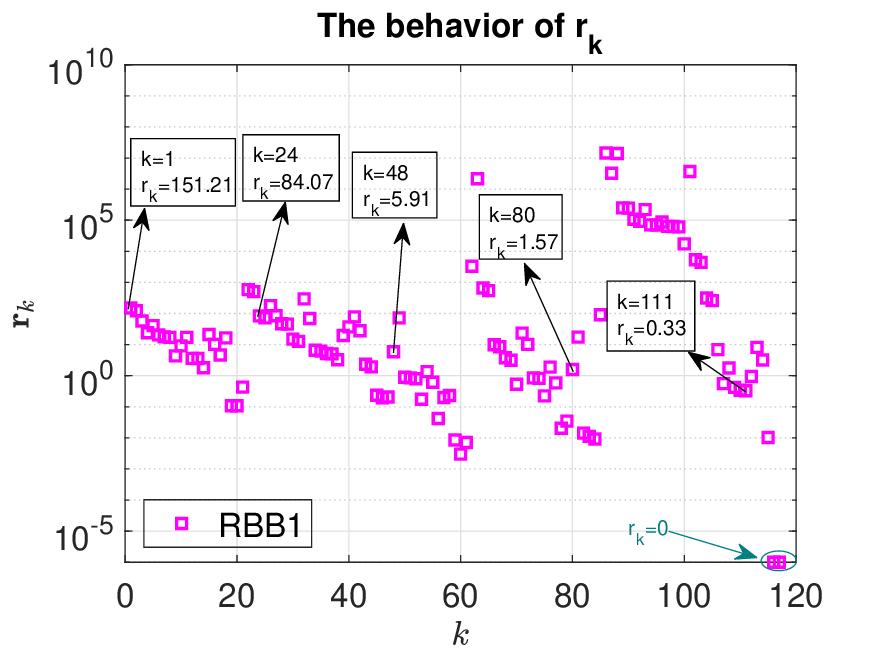}}\hspace{-4pt}
	\subfigure{
		\includegraphics[width=0.45\textwidth]{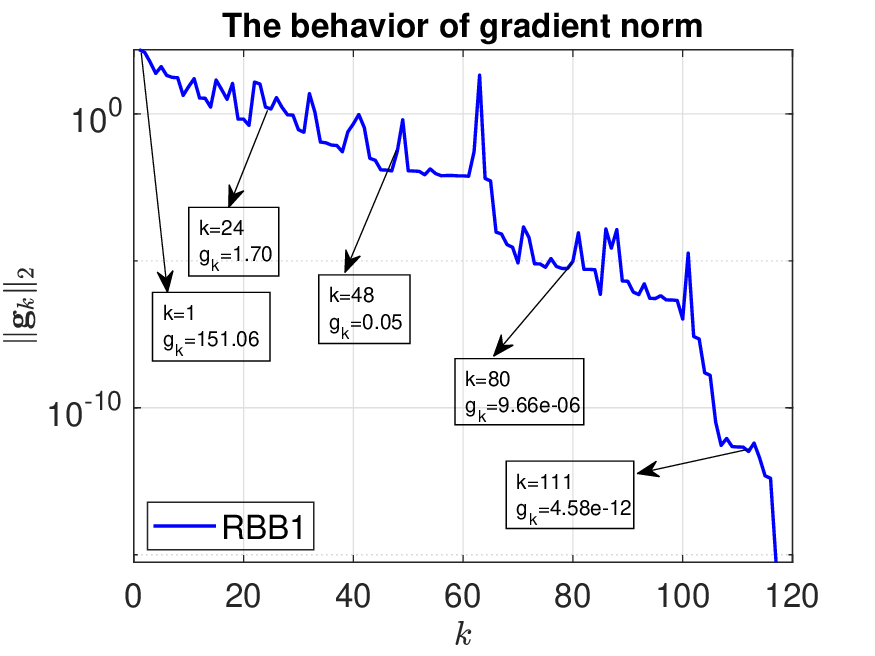}}
	\caption{\textit{Performance of RBB method on the problem \eqref{pro:quaa}} with diagonal matrix  \eqref{equ:non-sto}.}	
	\label{figure:RBB}	
\end{figure}

In this typical numerical example, the BB1 and RBB methods require $255$ and $117$ iterations, respectively, to meet the termination condition. These results demonstrate the effectiveness of the  regularization parameter scheme \eqref{equ:parameter}. Furthermore, a careful analysis of these two figures is worthwhile. 

Observing the $r_{k}$ metrics in both figures, they generally exhibit oscillations around $1$ with progressively increasing amplitudes. These numerical results validate the theoretical analysis presented in Section \ref{sec:n-dimensional}. Moreover, due to the negative feedback regulation effect of the regularization parameter on the non-homogeneous term $h_{k-1}$ in RBB method, the sequence $\{r_{k}\}$ in RBB shows greater stability compared to that in BB1.

By comparing the variation patterns of $\|\g_{k}\|_{2}$ and $r_{k}$, it can be seen that significant decreases in $\|\g_{k}\|_{2}$ originate from substantial declines in $r_{k}$. Notably, when $r_{k}\le1$, even if subsequent $r_{k}$ values decrease, the corresponding $\|\g_{k}\|_{2}$ does not show a significant reduction. A large-scale decrease in $\|\g_{k}\|_{2}$ only becomes possible after the rising phase of the $\cos(k\theta)$. Therefore, we are particularly concerned with the decreasing behavior of $r_{k}$ when $r_{k}>1$ (or $r_{k}\gg1$). Consequently, starting from $r_{1}$, when identifying the next index not exceeding $r_{1}$ in Figures \ref{figure:BB1} and \ref{figure:RBB}, we preferentially select indexes greater than $1$ for observation.

\section*{Declarations}

\begin{itemize}
	\item Funding: This work is supported by the National Science Foundation of China (NSFC: \# 12371099).
	\item Conflict of interest: Not applicable.
	\item Ethics approval and consent to participate: Not applicable.
	\item Data availability: All data that support the findings of this study are included within the article. 
	\item Materials availability: Not applicable.
	\item Code availability: Not applicable.  
\end{itemize}

\bibliographystyle{plain}

\end{document}